\numberwithin{equation}{subsection}
\newtheorem{thm}[subsection]{Theorem}
\newtheorem{cor}[subsection]{Corollary}
\newtheorem{lem}[subsection]{Lemma}
\newtheorem{prop}[subsection]{Proposition}
\theoremstyle{definition}
\newtheorem{defn}[subsection]{Definition}
\theoremstyle{remark}
\newtheorem{rem}[subsection]{Remark}
\newtheorem{example}[subsection]{Example}
\setlist[enumerate,1]{label={\rm(\roman*)}, ref={\rm\roman*}}
\newcommand{\mc}{\mathcal }
\newcommand{\scr}[1]{\mathbf{\EuScript{#1}}}
\newcommand{\logs}{\scr {L}og}
\DeclareMathOperator{\Sp}{Spec}
\DeclareMathOperator{\colim}{colim}
\DeclareMathOperator{\Der}{Der}
\DeclareMathOperator{\Hom}{Hom}
\DeclareMathOperator{\sgn}{sgn}
\DeclareMathOperator{\Qcoh}{Qcoh}
\DeclareMathOperator{\Ext}{Ext}
\DeclareMathOperator{\Et}{Et}
\DeclareMathOperator{\Cov}{Cov}
\DeclareMathOperator{\Eq}{Eq}
\newcommand{\id}{\mathrm{id}}
\newcommand{\dR}{\mathrm{dR}}
\newcommand{\gp}{\mathrm{gp}}
\newcommand{\qcoh}{\mathrm{qcoh}}
\newcommand{\aff}{\mathrm{aff}}
\newcommand{\op}{\mathrm{op}}
\newcommand{\Mod}{\mathrm{Mod}}
\newcommand{\dga}{\mathrm{dga}}
\newcommand{\pt}{\mathrm{pt}}
\newcommand{\gr}{\mathrm{gr}}
\newcommand{\mls}{\mathscr}
\newcommand{\Liset}{{\mathrm{Lis}\textrm{-}\mathrm{\acute{E}t}}}
\newcommand{\liset}{{\mathrm{lis}\textrm{-}\mathrm{\acute{e}t}}}
\newcommand{\et}{\mathrm{\acute{e}t}}
\newcommand{\mcH }{\mc H \mc H}
\newcommand{\lra}{\longrightarrow}
\newcommand{\xrightrightarrows}[2][]{\mathrel{
 \raise.40ex\hbox{$
       \ext@arrow 3095\rightarrowfill@{\phantom{#1}}{#2}$}
 \setbox0=\hbox{$\ext@arrow 0359\rightarrowfill@{#1}{\phantom{#2}}$}
 \kern-\wd0 \lower.40ex\box0}}
\newcommand{\longrightrightarrows}{\xrightrightarrows{\hphantom{0pt}}} 
\newcommand{\longhookrightarrow}{\lhook\joinrel\longrightarrow}
\newcommand{\supth}[1]{\ensuremath{#1^{\mathrm{th}}}}
\title{Hochschild homology for log schemes}
\author{Martin Olsson}
\address{Department of Mathematics, 970 Evans Hall \#3840, University of California, Berkeley, CA 94720-3840, USA}
\email{martinolsson@berkeley.edu}
\begin{document}



\maketitle

\begin{prelims}

\DisplayAbstractInEnglish

\bigskip

\DisplayKeyWords

\medskip

\DisplayMSCclass

\end{prelims}


\newpage

\setcounter{tocdepth}{1}

\tableofcontents


\section{Introduction}

The purpose of this article is to define Hochschild and cyclic homology for log schemes in the sense of Fontaine and Illusie.   As in other work of the author on logarithmic geometry, the basic idea here is to view logarithmic Hochschild homology as a special case of the classical theory without log structures using the stacks $\logs $ introduced in \cite{O}.  

\subsection{}\label{R:1.1a}
The approach discussed here was originally presented in a lecture in 2018; see  \cite{Olssonlecture}. Subsequently, several other authors have defined Hochschild homology for log schemes, including \cite{binda2023hochschildkostantrosenberg, hablicsek2023logarithmic}.  The output of the theory developed here agrees with the one in \cite{hablicsek2023logarithmic}, as noted in that article.  The use of the stacks $\logs $, however, places the theory in a much more general setting of defining relative Hochschild homology for a morphism $X\rightarrow \mc S$, for $X$ an algebraic space and $\mc S$ an algebraic stack.  This is the setting for much of the work in this article.  The difference between our approach and that in \cite{binda2023hochschildkostantrosenberg} is akin to the comparison between the two approaches to the cotangent complex in \cite{MR2195148}.
    More precisely, we expect  that one should be able to define a map from the Hochschild homology complex in \cite{binda2023hochschildkostantrosenberg} to the log Hochschild homology complex defined in this paper, using the observation \cite[Remark~3.16]{binda2023hochschildkostantrosenberg} that the Hochschild homology defined in that paper can be defined using a suitable derived version of the log diagonal combined with the description in Theorem~\ref{T:4.2} below.  Combined with the HKR isomorphism and the comparison between the two log cotangent complexes in \cite[Corollary~8.34]{MR2195148}, we expect this map to be an isomorphism for integral morphisms of log schemes.  This in turn suggests  that the Hochschild homology defined in \cite{binda2023hochschildkostantrosenberg} could be defined by Kan extension from the theory defined in this article.  However, we have not worked out the many technical details involved with this potential approach.

\subsection{} 
Let $k$ be a ring and $M_k$ a fine log structure on $\Sp (k)$.  For a log flat morphism of fine log schemes
$$
f\colon (X, M_X)\lra (\Sp (k), M_k),
$$
 with underlying morphism of schemes quasi-compact and quasi-separated, we define certain $k$-modules
 $$
 HH_n((X, M_X)/(k, M_k)),
 $$
 which we refer to as the \emph{Hochschild homology groups of\, $(X, M_X)/(k, M_k)$}, as well as $k$-modules 
 $$
HC_n((X, M_X)/(k, M_k)),
$$
which we refer to as the \emph{cyclic homology groups of\, $(X, M_X)/(k, M_k)$}.

In the case when the log structures are trivial, our definitions agree with the definitions for schemes given  in \cite{W, WG}.

\begin{rem} Much of the discussion carries over without the flatness assumption.  However, in the non-flat case, it is better to consider derived versions of the theory.  We do not consider that here and instead restrict to the flat case.
\end{rem}

\subsection{} The Hochschild homology is obtained by taking cohomology of a complex $\mcH _{(X, M_X)/(k, M_k)}$ whose associated cohomology sheaves (with the usual reindexing) we will denote by $$\mcH _*((X, M_X)/(k, M_k)).$$  To define cyclic homology we need to use more structure on $\mcH  _*((X, M_X)/(k, M_k))$, as in the classical case of schemes.  We will show the following properties of Hochschild and cyclic homology:

\begin{enumerate}
\item\label{item1} (Multiplicative structure) The Hochschild homology sheaves $\mcH _*((X, M_X)/(k, M_k))$ form a commutative graded algebra with $\mcH _0((X, M_X)/(k, M_k)\simeq \mls O_X$ (as sheaves of $k$-algebras).

\item\label{item2} (Connes operator) There is a generalization of the Connes operator 
$$
B\colon \mcH _m((X, M_X)/(k, M_k))\lra \mcH _{m+1}((X, M_X)/(k, M_k))
$$
giving $\mcH _*((X, M_X)/(k, M_k))$ the structure of a commutative differential graded algebra.  Moreover, the induced map
$$
\mls O_X\lra \mcH _0((X, M_X)/(k, M_k))\overset{B}\lra \mcH _1((X, M_X)/(k, M_k))
$$
has a natural extension to a log derivation (in the sense of \cite[Definition~1.1.2]{Og}) defining an isomorphism
$$
\epsilon _1\colon \Omega ^1_{(X, M_X)/(k, M_k)}\lra \mcH _1((X, M_X)/(k, M_k)).
$$

\item\label{item3} (HKR isomorphism)  By the universal property of the log de Rham complex, the map $\epsilon _1$ extends to a morphism of commutative differential graded algebras
$$
\left(\Omega ^\bullet _{(X, M_X)/(k, M_k)}, d\right)\lra (\mcH _*((X, M_X)/(k, M_k)), B).
$$
If $(X, M_X)\rightarrow (\Sp (k), M_k)$ is log smooth, then this map is an isomorphism.

\item\label{item4} (SBI sequence) Cyclic and Hochschild homology are related by a long exact sequence 
$$
\cdots \lra HH_m((X, M_X)/(k, M_k))\lra  HC_m((X, M_X)/(k, M_k))\lra  HC_{m-2}((X, M_X)/(k, M_k))\lra  \cdots 
$$
generalizing the SBI sequence for schemes; see \cite[Theorem~2.2.1]{L}.

\item\label{item5} If $X = \Sp (A)$ is affine and $(X, M_X)$ is log smooth over $(k, M_k)$, then there is a spectral sequence abutting to $HC_*((X, M_X)/(k, M_k))$ with $E^1_{pq}$-term given by $\Omega ^{q-p}_{(X, M_X)/(k, M_k)}$ for $q\geq p\geq 0$ and $0$ otherwise.
If $k$ is a $\mathbb{Q}$-algebra, then this spectral sequence degenerates at the $E^2$-page and there is an isomorphism
$$
HC_m((X, M_X)/(k, M_k))\simeq \Omega ^m_{(X, M_X)/(k, M_k)}\oplus \bigoplus _{i\geq 1}H^{m-2i}_{\dR}((X, M_X)/(k, M_k)).
$$
\end{enumerate}

{\samepage
\begin{rem} 
  \leavevmode
  \begin{enumerate}
    \item The decomposition in~\eqref{item5} above can be described more canonically using Adams operations, which we discuss in Section~\ref{S:section9}.

    \item In~\eqref{item4} above we write the SBI sequence in the form most commonly found in the literature.  However, from the viewpoint of spaces with $S^1$-action, the sequence is more canonically written with $HC_{m-2}((X, M_X)/(k, M_k))$ replaced by $HC_{m-2}((X, M_X)/(k, M_k))\otimes _{\mathbb{Z}}H_1(S^1, \mathbb{Z})$, the choice of an orientation for $S^1$ giving an isomorphism (see for example \cite[Appendix~D.6]{L}).  Written this way the sequence in characteristic $0$ also becomes compatible with Adams operations as in \cite[Th\'eor\`eme~4.8]{L2}.
      \end{enumerate}
\end{rem}
}

\subsection{} The work in this article is based on our earlier results \cite{O} providing a dictionary between certain notions in logarithmic geometry with corresponding notions in the theory of stacks.  We summarize some of the basic aspects of this dictionary in Section~\ref{S:logreview}.  Using this, the definitions and results about Hochschild and cyclic homology for log schemes are obtained from more general definitions and results about Hochschild and cyclic homology for morphisms $X\rightarrow \mls S$ from  a scheme (or algebraic space) to an algebraic stack.  We do not write out these more general statements here in the introduction, but as noted above it is in this generality that we work for most of the article.

There is one point in this regard, however, that we wish to highlight.  In general, for a morphism $X\rightarrow \mls S$ from an algebraic space to an algebraic stack, the universal property of $\Omega ^1_{X/\mls S}$ is not captured by derivations alone like in the case of schemes.  In the logarithmic setting this manifests itself in the fact that a log derivation contains more information than an ordinary derivation.  In the case of schemes, the de Rham complex $\Omega ^\bullet _{X/\mls S}$ is the universal commutative differential graded algebra receiving an algebra map from $\mls O_X$.  In general, however, the universal property of $\Omega ^\bullet _{X/\mls S}$ is not expressed as simply.  The solution to this problem used in this article involves the introduction of a site $\mls C_{X/\mls S}$ which we discuss in Section~\ref{S:section5.5}.

\begin{rem} Similar techniques to the ones employed here should enable a study of logarithmic topological Hochschild homology.  A theory of logarithmic topological Hochschild homology, as well as logarithmic Hochschild homology, has been developed by Rognes, Sagave, and Schlichtkrull \cite{MR2544395, MR3412362, MR3760301} using a different approach.  As in Section~\ref{R:1.1a} we expect that the two approaches should be related by comparing suitable diagonal maps in the derived and ordinary setting.
\end{rem}

\subsection{Conventions}

All schemes and stacks considered in this article will be assumed to be locally finite-dimensional and locally Noetherian.

\subsection*{Acknowledgments}
Our approach to the foundations is based on a presentation by Peter Scholze of the usual theory for ordinary schemes which he gave at the Simons symposium on P-adic Hodge theory in May, 2017.  I thank Bhargav Bhatt, Lars Hesselholt, and M\'arton Hablicsek for helpful conversations and correspondence, and the referee for their useful suggestions and corrections.

\section{Log schemes and stacks}\label{S:logreview}

\subsection{}  Recall from \cite{O} that if $(S, M_S)$ is a fine log scheme, then there is an associated algebraic stack, denoted by $\logs _{(S, M_S)}$, over the category of $S$-schemes.  The fiber of $\logs _{(S, M_S)}$ over an $S$-scheme $f\colon X\rightarrow S$ is the groupoid of pairs $(M_X, f^b)$, where $M_X$ is a fine log structure on $X$ and $f^b\colon f^*M_S\rightarrow M_X$ is a morphism of log structures on $X$.   By \cite[Theorem~1.1]{O} the stack $\logs _{(S, M_S)}$ is an algebraic stack with finitely presented diagonal.  Giving a morphism of log schemes $(X, M_X)\rightarrow (S, M_S)$ is therefore equivalent to giving an ordinary morphism
$$
X\lra \logs _{(S, M_S)}.
$$

The stack $\logs _{(S, M_S)}$ is not of finite type in general, but we will primarily be interested in the case when $X$ is quasi-compact.  In this case a morphism $X\rightarrow \logs _{(S, M_S)}$ factors through a quasi-compact open substack $\mls U\subset \logs _{(S, M_S)}$.

\subsection{} Properties of morphisms of log schemes can be described in terms of the corresponding morphisms of stacks.  For example, a morphism of log schemes $(X, M_X)\rightarrow (S, M_S)$ is log smooth (resp.\ log flat) if and only if the corresponding morphism $X\rightarrow \logs _{(S, M_S)}$ is smooth (resp.\ flat) in the ordinary sense (see \cite[Theorem 4.6]{O}).  Furthermore, the sheaf of logarithmic differentials $\Omega ^1_{(X, M_X)/(S, M_S)}$ is canonically isomorphic to the sheaf $\Omega ^1_{X/\logs _{(S, M_S)}}$, and this canonical isomorphism is compatible with the universal derivations.

\subsection{} The stack $\logs _{(S, M_S)}$ is somewhat unwieldy, but there is a simpler description in terms of toric stacks.  

For a fine monoid $P$, the affine scheme
$$
\Sp (\mathbb{Z}[P])
$$
has a natural action of the torus $D(P^{\gp}):= \Sp (\mathbb{Z}[P^{\gp}])$, and we can consider the stack quotient
$$
\Theta _P:= [\Sp (\mathbb{Z}[P])/D(P^{\gp})].
$$
By \cite[Proposition~5.20]{O} this stack can be viewed as the stack whose fiber over a scheme $T$ is the groupoid of pairs $(M_T, \beta )$, where $M_T$ is a fine log structure on $T$ and $\beta \colon P\rightarrow \overline M_T$ is a morphism of sheaves of monoids which fppf locally on $T$ lifts to a chart.   The universal pair is obtained by descent from the natural log structure on $\Sp (\mathbb{Z}[P])$ induced by the map $P\rightarrow \mathbb{Z}[P]$.

\begin{example}\label{E:2.3b}
If $M_T$ is a fine log structure on a scheme $T$ such that $\overline M_{T, \bar t}$ is either $0$ or $\mathbb{N}$ for every geometric point $\bar t\rightarrow T$, then there exists a unique morphism $\mathbb{N}\rightarrow \overline M_T$ which \'etale locally lifts to a chart.  One concludes from this that there is an open immersion $[\mathbb{A}^1/\mathbb{G}_m]\subset \logs _{(\Sp (\mathbb{Z}), \mls O_{\Sp (\mathbb{Z})}^*)}$ classifying such log structures.  The stack $[\mathbb{A}^1/\mathbb{G}_m]$ can also be viewed as associating to a scheme $T$ the groupoid of pairs $(\mls L, \rho )$, where $\mls L$ is a line bundle on $T$ and $\rho \colon \mls L\rightarrow \mls O_T$ is a morphism of line bundles.  This connection is discussed in \cite[Complement 1]{MR1463703}.
\end{example} 

\subsection{}\label{P:2.4} More generally, there is a relative version.  Let $(S, M_S)$ be a fine log scheme, and let $\beta _S\colon Q\rightarrow M_S$ be a chart, with $Q$ a fine monoid. Also fix  a morphism of fine monoids $\theta \colon Q\rightarrow P$ such that the associated map of groups $Q^{\gp}\rightarrow P^{\gp}$ is injective.  Let $G$ denote the quotient group $P^{\gp}/Q^{\gp}$.  We can then consider the stack quotient $\Theta$ over $S$ given by
$$
\Theta:= \left[\Sp _S\left(\mls O_S[P]\otimes _{\mls O_S[Q]}\mls O_S\right)/D(G)\right].
$$
By \cite[5.20]{O} the stack $\Theta$ can be viewed as the stack whose fiber over an $S$-scheme $f\colon T\rightarrow S$ is the groupoid of triples $(M_T, f^b, \gamma )$, where $M_T$ is a fine log structure on $T$, $f^b\colon f^*M_S\rightarrow M_T$ is a morphism of log structures on $T$, and $\gamma \colon P\rightarrow \overline M_T$ is a morphism of sheaves of monoids which fppf locally on $T$ lifts to a chart and which is such that the diagram
$$
\xymatrix{
P\ar[r]^-{\gamma } & \overline M_T\\
Q\ar[u]_-{\theta }\ar[r]^-{\beta }& f^{-1}M_k\ar[u]^-{f^b}}
$$
commutes.

\subsection{}\label{P:2.5} There is a natural map $\Theta\rightarrow \logs _{(S, M_S)}$ which is shown in \cite[Corollary~5.25]{O} to be \'etale.  For many purposes one can work with the stack $\Theta$ instead of the stack $\logs _{(S, M_S)}$.  The stack is sometimes easier to work with; for example, it has affine diagonal.

\begin{example} Let $V$ be a discrete valuation ring, and let $\pi \in V$ be a uniformizer defining a map $\mathbb{N}\rightarrow V$ and a log structure $M_V$ on $\Sp (V)$.  Let $f\colon X\rightarrow \Sp (V)$ be a semistable scheme over $V$ such that the preimage of the closed point $s\in \Sp (V)$ is a divisor with simple normal crossings in $X$.  Let $I$ denote the set of irreducible components of the closed fiber of $X$, and let $M_X$ denote the log structure on $X$ defined by the closed fiber. Then there is a natural map $\mathbb{N}^I\rightarrow \overline M_X$ which locally lifts to a chart.  We have a log smooth morphism
$$
f\colon (X, M_X)\lra (\Sp (V), M_V)
$$
and a commutative diagram of sheaves of monoids
$$
\xymatrix{
\mathbb{N}^I\ar[r]& \overline M_X\\
\mathbb{N}\ar[u]_-{\Delta }\ar[r]& f^{-1}\overline M_V\ar[u]^-{f^b}\rlap{.}}
$$
The toric stack $\Theta$ associated to this situation then has the following explicit description.  Choose an ordering $\{1, \dots, r\}\simeq I$.  We then have
$$
\Theta\simeq \left[\Sp (V[x_1, \dots, x_r]/(x_1\cdots x_r = \pi ))/\mathbb{G}_m^{r-1}\right],
$$
where $\underline u= (u_1, \dots, u_{r-1})$ acts on $V[x_1, \dots, x_r]$ by $\underline u*x_i = u_ix_i$ if $1\leq i\leq r-1$ and
$$
\underline u*x_r = (u_1\cdots u_{r-1})^{-1}x_r.
$$
\end{example}

\section{Definition of Hochschild homology}

\subsection{}\label{P:3.1} Let $k$ be a ring, let $\mls S/k$ be an algebraic stack with quasi-compact diagonal, and let
$$
f\colon X\lra \mls S
$$
be a flat morphism from a quasi-compact algebraic space $X$.  The purpose of this section is to define a complex 
$$
\mcH _{X/\mls S}\in D_{\qcoh}(X, \mls O_X)
$$
in the derived category of complexes
of $\mls O_X$-modules on the \'etale site of $X$ with quasi-coherent cohomology sheaves generalizing the usual Hochschild complex for schemes.

\subsection{}\label{P:3.2} First consider the case when $\mls S = \Sp (R)$ is an affine scheme.  In this case one can define $\mcH _{X/R}$ using the method of \cite[Section~4]{WG}. Consider the complex of presheaves which to any affine \'etale $X$-scheme $\Sp (A)\rightarrow X$ associates the Hochschild complex 
$$
C_{A/R}^*
$$
of $A/R$ defined in \cite[Section~9.1.1]{W2}.  The formation of this complex is functorial in the affine $X$-scheme, and we let $\mc C ^*_{X/R}$ denote the associated complex of sheaves of $\mls O_X$-modules on the \'etale site of $X$.

The \emph{Hochschild complex}, denoted by $\mcH _{X/R}$, is the image of $\mc C^*_{X/R}$ in the derived category of $\mls O_X$-modules.  However, we find it convenient to distinguished between $\mc C^*_{X/R}$ and $\mcH _{X/R}$ and reflect this in the notation.

The complex $\mc C^*_{X/R}$ is functorial in the sense that if 
$$
\xymatrix{
X'\ar[r]^-g\ar[d]^-{f'}& X\ar[d]^-f\\
\Sp (R')\ar[r]& \Sp (R)}
$$
is a commutative diagram of algebraic spaces, then there is an induced map of complex of sheaves of $\mls O_{X'}$-modules
$$
g^*\mc C^*_{X/R}\lra \mc C_{X'/R'}^*.
$$

\subsection{}\label{P:3.3}  To define the Hochschild complex in the general case, let $f\colon X\rightarrow \mls S$ be a flat  morphism from a quasi-compact algebraic space $X$ to an algebraic stack $\mls S$ with quasi-compact diagonal.  Let $S_\bullet \rightarrow \mls S$ be a smooth hypercover with each $S_n = \Sp (R_n)$ affine, and let $X_\bullet $ be the fiber product $X\times _{\mls S}S_\bullet $.  For each~$n$, we then have a complex $\mc C_{X_n/R_n}^*$, and by the functoriality already discussed, we have for every morphism $h\colon [m]\rightarrow [n]$ with induced map $X(h)\colon X_n\rightarrow X_m$ a morphism of complexes
\begin{equation}\label{E:pullmap}
X(h)^*\mc C_{X_m/R_m}^*\lra \mc C_{X_n/R_n}^*.
\end{equation}
We therefore get a complex of sheaves of $\mls O_{X_\bullet }$-modules $\mc C_{X_\bullet /R_\bullet }$ in the topos of sheaves on $X_{\bullet , \et}$
defined in \cite[Section~12.4]{LMB}.    Let $\mcH _{X_\bullet /S_\bullet }$ denote the image of $\mc C_{X_\bullet /R_\bullet }$ in $D(X_{\bullet, \et}, \mls O_{X_\bullet })$.

\begin{prop}\label{P:descent} The complex $\mcH _{X_\bullet /S_\bullet }$ lies in $D^-_{\qcoh}(X_\bullet , \mls O_{X_\bullet })$ and therefore $($by \cite[\href{https://stacks.math.columbia.edu/tag/0DC1}{Tag 0DC1}]{stacks-project}$)$ is isomorphic to the pullback of a unique object $\mcH _{X/\mls S}\in D(X, \mls O_X)$.
\end{prop}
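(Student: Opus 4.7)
The plan is to verify the hypotheses of \cite[\href{https://stacks.math.columbia.edu/tag/0DC1}{Tag 0DC1}]{stacks-project}: namely, that each $\mcH _{X_n/R_n}$ lies in $D^-_{\text{qcoh}}(X_n, \mls O_{X_n})$ and that the simplicial object $\mcH _{X_\bullet /S_\bullet }$ is \emph{cartesian}, i.e.\ for every $h:[m]\to [n]$ the structure map
$$
LX(h)^*\mcH _{X_m/R_m}\To \mcH _{X_n/R_n}
$$
induced by \eqref{E:pullmap} is an isomorphism in $D(X_n, \mls O_{X_n})$.

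First I would handle the levelwise statement. On an affine \'etale patch $\Sp (A)\to X_n$ the complex $\mc C^*_{X_n/R_n}$ restricts to the usual Hochschild complex $C^*_{A/R_n}$ of \cite[9.1.1]{W2}. This complex is concentrated in non-positive cohomological degrees (so bounded above) and its cohomology sheaves are the quasi-coherent sheaves associated to the Hochschild homology modules $HH_*(A/R_n)$. Sheafifying over $X_n$ gives $\mcH _{X_n/R_n}\in D^-_{\text{qcoh}}(X_n, \mls O_{X_n})$.

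The heart of the proposition is the cartesian condition, which reduces to a sheafified flat base change statement for Hochschild complexes. Since $X\to \mls S$ is flat, each $X_n\to S_n$ is flat, so every term $A^{\otimes _{R_n}(p+1)}$ of $\mc C^*_{X_n/R_n}$ is a flat $\mls O_{X_n}$-module. In particular $\mc C^*_{X_m/R_m}$ is a bounded-above complex of flat modules, hence K-flat, and for any $h:[m]\to [n]$ the derived pullback agrees with the ordinary pullback $LX(h)^*\mc C^*_{X_m/R_m}=X(h)^*\mc C^*_{X_m/R_m}$. A direct degreewise calculation would identify the map \eqref{E:pullmap} with the natural isomorphism
$$
A^{\otimes _{R_m}(p+1)}\otimes _{R_m}R_n\;\xrightarrow{\ \sim\ }\;B^{\otimes _{R_n}(p+1)},\qquad B:=A\otimes _{R_m}R_n,
$$
and I would check that these isomorphisms are compatible with the Hochschild differentials and with composition of morphisms in $\Delta $.

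The main obstacle is precisely this cartesian condition: one must match the sheaf-level pullback of the Hochschild complex for $X_m/R_m$ with the Hochschild complex for the pulled-back algebra, and verify that the identifications fit together coherently under the full simplicial structure of $\mc C^*_{X_\bullet /R_\bullet }$. Once this is in hand, Tag 0DC1 yields a unique object $\mcH _{X/\mls S}\in D(X, \mls O_X)$ whose pullback to $X_\bullet $ recovers $\mcH _{X_\bullet /S_\bullet }$, as claimed.
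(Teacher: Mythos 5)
Your proposal is correct and follows essentially the same route as the paper: one checks the hypotheses of Tag 0DC1 by (a) establishing the levelwise statement, which the paper handles by citing \cite[0.4 and 4.3]{WG}, and (b) showing the simplicial transition maps \eqref{E:pullmap} are quasi-isomorphisms, which on affines reduces to exactly the base-change isomorphism $A^{\otimes _{R_m}(p+1)}\otimes _{R_m}R_n\simeq B^{\otimes _{R_n}(p+1)}$ you write down, followed by sheafification. Your extra K-flatness discussion is harmless but unnecessary, since the structure maps of a smooth hypercover are smooth, hence flat, so $X(h)^*$ already computes the derived pullback.
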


\begin{proof}
It suffices to verify that \eqref{E:pullmap} is a quasi-isomorphism for all morphisms in $\Delta $ and that $\mcH _{X_0/R_0}$ lies in $D^-_{\qcoh}(X_0, \mls O_{X_0})$.  That $\mcH _{X_0/R_0}$ lies in $D^-_{\qcoh}(X_0, \mls O_{X_0})$ follows from \cite[Corollary~0.4 and Example~4.3]{WG}.  The statement that \eqref{E:pullmap} is a quasi-isomorphism is immediate in the case when $X_m$ is affine, and the general case follows from this by sheafification.
\end{proof}

\subsection{}\label{P:3.5} The complex $\mcH _{X/\mls S}$ is independent of the choice of hypercover $S_\bullet \rightarrow \mls S$.  This can be seen as follows.  Let $HR_{\aff}(\mls S)$ denote the category whose objects are hypercovers $S_\bullet $ of $\mls S$ with each $S_n$ affine, and whose morphisms are morphisms of hypercovers up to simplicial homotopy over $\mls S$ as in \cite[Section~V.7.3.1.6]{SGA4}.  By \cite[Th\'eor\`eme~V.7.3.2(1)]{SGA4}, the category $HR_{\aff}(\mls S)$ is cofiltering.

For each object $S_\bullet \in HR_{\aff}(\mls S)$, let $\mcH _{X/\mls S, S_\bullet }\in D^-_{\qcoh}(X, \mls O_X)$ be the object provided by Proposition~\ref{P:descent}.  It follows from the construction that, for a morphism of hypercovers $u\colon S'_\bullet \rightarrow S_\bullet $, we get an induced map
$$
e_u\colon \mcH _{X/\mls S, S_\bullet }\lra \mcH _{X/\mls S, S'_\bullet }.
$$

\begin{prop}\label{P:3.6}
  \leavevmode
  \begin{enumerate}
    \item\label{P:3.6-1}
  If two morphisms $u_0, u_1\colon S'_\bullet \rightarrow S_\bullet $ are related by a simplicial homotopy, then $e_{u_0} = e_{u_1}$.

\item\label{P:3.6-2} For any morphism $u\colon S'_\bullet \rightarrow S_\bullet $ of affine hypercovers of $\mls S$, the induced morphism $e_u\colon \mcH _{X/\mls S, S_\bullet }\rightarrow \mcH _{X/\mls S, S'_\bullet }$ is an isomorphism.
  \end{enumerate}
\end{prop}

\begin{proof}
We begin by proving~\eqref{P:3.6-1}.

For a simplicial algebraic space $Y_\bullet $, let $Y_\bullet \times \Delta _1$ denote the simplicial algebraic space given by
$$
[n]\longmapsto Y_n\times \Hom_\Delta ([n], [1]),
$$
so that a simplicial homotopy between $u_0$ and $u_1$ is given by a morphism of simplicial algebraic spaces 
$$
H\colon S'_\bullet \times \Delta _1\lra S_\bullet
$$
such that $u_i$ is obtained by composing $H$ with the map $S'_\bullet \rightarrow S'_\bullet \times \Delta _1$ defined by the morphism $[0]\rightarrow [1]$ with image $i$.  Let
$$
H_X\colon X'_\bullet \times \Delta _1\lra X_\bullet
$$
be the morphism induced by $H$.  Just as in the definition of $\mc C^*_{X_\bullet /R_\bullet }$, we get a complex $\mc C^*_{X_\bullet ^\prime \times \Delta _1/S'_\bullet \times \Delta _1}$ which is canonically isomorphic to $p^*\mc C^*_{X'_\bullet /S'_\bullet }$, where $p\colon X'_\bullet \times \Delta _1\rightarrow X'_\bullet $ is the projection.  For $i=0,1$ let
$$
e_i\colon X'_\bullet \lra X'_\bullet \times \Delta _1
$$
be the map defined by the map $[0]\rightarrow [1]$ sending $0$ to $i$.  By the functoriality of the Hochschild complex, we then get morphisms
$$
\alpha \colon H_X^*\mc C^*_{X'_\bullet /S_\bullet }\lra \mc C^*_{X'_\bullet \times \Delta _1/S'_\bullet \times \Delta _1}
$$
and
$$
\beta _i\colon e_i^*\mc C^*_{X'_\bullet \times \Delta _1/S'_\bullet \times \Delta _1}\lra \mc C^*_{X'_\bullet /S'_\bullet }.
$$
If $v_i\colon X'_\bullet \rightarrow X_\bullet $ denotes the map induced by $u_i$, then the composition of $\alpha $ and $\beta _i$ induces  a morphism
$$
\rho _i\colon v_i^*\mc C^*_{X_\bullet /S_\bullet }\lra \mc C^*_{X'_\bullet /S'_\bullet },
$$
and we need to show that the two induced maps in $D^+_{\qcoh}(X, \mls O_X)$ are equal.

Let $X^\Delta _\et$ denote the topos of cosimplicial sheaves on $X_{\et}$.  We then get a commutative diagram of topoi
$$
\xymatrix{
X'_{\bullet, \et} \ar@<.5ex>[r]^-{e_0}\ar@<-.5ex>[r]_-{e_1}\ar[rd]_-{\pi _1}& (X'_\bullet \times \Delta _1)_{\et }\ar[r]^-{H_X}\ar[d]^-{\pi _2}& X_{\bullet, \et }\ar[ld]^-{\pi _3}\\
& X_{\et }^\Delta \rlap{.}& }
$$
Let $\mathbb{Z}^{\Delta _1}$ be the object of $X_\et ^\Delta $ given by
$$
[n]\longmapsto \prod _{[n]\lra [1]}\mathbb{Z}.
$$
We then have
$$
R\pi _{2*}p^*\mc C^*_{X'_\bullet /S'_\bullet }\simeq R\pi _{1*}\mc C_{X'_\bullet /S'_\bullet }^*\otimes _{\mathbb{Z}}\mathbb{Z}^{\Delta _1}
$$
in $D(X_\et ^\Delta , \mls O_{X^\Delta _\et })$.  We therefore get a commutative diagram
$$
\xymatrix{
R\pi _{3*}\mc C_{X_\bullet /S_\bullet }\ar[r]^-{\bar \alpha }& R\pi _{1*}\mc C_{X'_\bullet /S'_\bullet }\otimes _{\mathbb{Z}}\mathbb{Z}^{\Delta _1}\ar@<.5ex>[r]^-{\bar \beta _1}\ar@<-.5ex>[r]_-{\bar \beta _2}& R\pi _{1*}\mc C_{X'_\bullet /S'_\bullet },}
$$
where we write $\bar \alpha $ (resp.\ $\bar \beta _i$) for the map induced by $\alpha $ (resp.\ $\beta _i$).  To prove~\eqref{P:3.6-1} it suffices to show that the two compositions $\bar \beta _i\circ \bar \alpha $ induce the same map in the derived category after passing to total complexes.  This follows from the general fact that the two maps induced by the $\bar \beta _i$ are equal.

In light of~\eqref{P:3.6-1}, to prove~\eqref{P:3.6-2} it suffices to consider the case when all of the morphisms $u_n\colon S'_n\rightarrow S_n$ are flat.  In this case it is immediate from the construction that if $u_X\colon X'_\bullet \rightarrow X_\bullet $ is the induced morphism of simplicial algebraic spaces, then the natural map
$$
u_X^*\colon \mc C^*_{X_\bullet /S_\bullet }\lra \mc C^*_{X'_\bullet /S_\bullet ^\prime }
$$
is a quasi-isomorphism.  Statement~\eqref{P:3.6-2} follows.
\end{proof}

\subsection{}\label{P:3.7} It follows that we get a functor
$$
\mcH _{X/\mls S, -}\colon HR_{\aff}(\mls S)^{\op}\lra D^-_{\qcoh}(X, \mls O_X)
$$
such that the colimit
$$
\mcH _{X/\mls S}:= \colim _{S_\bullet \in HR_{\aff}(\mls S)}\mcH _{X/\mls S, S_\bullet }
$$
exists, and for every $S_\bullet $ the map
$$
\mcH _{X/\mls S, S_\bullet }\lra \mcH _{X/\mls S}
$$
is an isomorphism.    This therefore gives a definition of $\mcH _{X/\mls S}$ independent of the choice of hypercover.

\begin{rem} Using the site $\mls C_{X/\mls S}$ introduced in Section~\ref{S:section5.5}, one can give an alternate construction of $\mcH _{X/\mls S}$ which does not require the use of hypercovers.
\end{rem}

\begin{defn}\label{D:3.8}
The \emph{Hochschild homology} of $X/\mls S$ is defined to be the $k$-modules 
$$
HH_n(X/\mls S):= H^{-n}(X, \mcH _{X/\mls S}).
$$
The \emph{Hochschild cohomology} of $X/\mls S$ is defined as 
$$
HH^n(X/\mls S):= \Ext^n_X(\mcH _{X/\mls S}, \mls O_X).
$$

Similarly, we define the \emph{Hochschild homology sheaves} of $X/\mls S$ to be 
$$
\mcH _{X/\mls S, n}:= \mls H^{-n}(\mcH _{X/\mls S}).
$$
\end{defn}

\begin{rem} This also gives a definition of Hochschild homology for a morphism of log schemes $f\colon (X, M_X)\rightarrow (S, M_S)$, with $X$ quasi-compact.  We define $\mcH _{(X, M_X)/(S, M_S)}\in D^-(X, \mls O_X)$ to be the complex $\mcH _{X/\logs _{(S, M_S)}}$ and the Hochschild homology groups as 
$$
HH_n((X, M_X)/(S, M_S)):= HH_n(X/\logs _{(S, M_S)}).
$$
\end{rem}

\section{Relation with the diagonal}

We continue with the notation of Section~\ref{P:3.3}.

\subsection{}\label{P:4.1}
Consider the fiber product $I:= X\times _{\mls S}X$ and the diagonal morphism $\Delta _{X/\mls S}\colon X\rightarrow I$.  The fiber product $I$ is an algebraic space since $\mls S$ is algebraic, but in general it is not separated and the diagonal $\Delta _{X/\mls S}$ is not a closed immersion (for example this is the case when $\mls S = \logs _{(k, M_k)}$).  Nonetheless, we can consider the sheaf on the small \'etale site $\Delta _X^{-1}\mls O_I$.  This is an $\mls O_X-\mls O_X$ bialgebra equipped with an epimorphism $q\colon \Delta _X^{-1}\mls O_I\rightarrow \mls O_X$.

\begin{thm}\label{T:4.2} There is a natural isomorphism
$$
\mls O_X\otimes ^{\mathbf{L}}_{\Delta _X^{-1}\mls O_I}\mls O_X\simeq \mcH _{X/\mls S}
$$
in $D(\mls O_X)$.
\end{thm}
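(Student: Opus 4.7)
The plan is to construct a natural map from the derived tensor product to $\mcH_{X/\mls S}$ and then verify it is an isomorphism by reducing to the classical affine case via étale descent on $X$ and smooth hypercover descent on $\mls S$. The key classical input is that the Hochschild complex is, essentially by construction, what one gets by applying $A \otimes_{A^e}(-)$ to the two-sided bar resolution of $A$ as an $A^e$-module, which is precisely the object computing $A \otimes^{\mathbf{L}}_{A^e} A$.

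More precisely, I would first treat the base case $\mls S = \Sp(R)$ and $X = \Sp(A)$ both affine. Here $I = \Sp(A \otimes_R A)$ and $\Delta_X^{-1}\mls O_I$ is the sheaf associated to $A^e := A \otimes_R A$. The two-sided bar complex $B_\bullet$ with $B_n = A \otimes_R A^{\otimes_R n} \otimes_R A \simeq A^e \otimes_R A^{\otimes_R n}$ is a resolution of $A$ by $A^e$-modules that are flat since $A$ is flat over $R$, and applying $A \otimes_{A^e}(-)$ recovers the Hochschild complex $C^*_{A/R}$ of \cite[9.1.1]{W2}. Sheafifying gives the claim in the doubly affine case.

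To reduce the general case to this, I would proceed in two stages. The claim is étale-local on $X$: for $u : U \to X$ étale, the induced map $U \times_{\mls S} U \to X \times_{\mls S} X$ is étale, and étale morphisms preserve stalks of the structure sheaf, so $u^{-1}(\Delta_X^{-1}\mls O_I) \simeq \Delta_U^{-1}\mls O_{U \times_{\mls S} U}$; the construction of \ref{P:3.3} is manifestly compatible with étale pullback on $X$, reducing to the case $X$ affine. To pass from a general $\mls S$ to the affine case, fix a smooth hypercover $S_\bullet \to \mls S$ with each $S_n$ affine, set $X_\bullet = X \times_{\mls S} S_\bullet$, and let $I_n := X_n \times_{S_n} X_n$. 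A direct computation gives $I_n \simeq I \times_{\mls S} S_n$ and shows that the square
$$
\xymatrix{X_n \ar[r]^-{\Delta_{X_n}} \ar[d] & I_n \ar[d]\\ X \ar[r]^-{\Delta_X} & I}
$$
is cartesian. Since the right vertical is smooth, the pullback of $\mls O_X \otimes^{\mathbf{L}}_{\Delta_X^{-1}\mls O_I} \mls O_X$ to $X_n$ is canonically identified with the levelwise construction $\mls O_{X_n} \otimes^{\mathbf{L}}_{\Delta_{X_n}^{-1}\mls O_{I_n}} \mls O_{X_n}$. Combining this with the affine case applied at each level and the descent result \ref{P:descent}, I obtain the desired global isomorphism.

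The main obstacle I anticipate is verifying the compatibility of the formation of $\Delta_X^{-1}\mls O_I$ with both descent steps, in spite of the fact that $I$ need not be separated and $\Delta_X$ need not be a closed immersion. This compatibility rests on the cartesian square displayed above and on the preservation of small étale stalks under étale pullback, but one must also check carefully that the derived tensor product, computed via flat resolutions of $\mls O_X$ over $\Delta_X^{-1}\mls O_I$, behaves well under these pullbacks so that no spurious higher Tor contributions appear.
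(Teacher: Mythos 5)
Your proposal follows essentially the same route as the paper's proof: the paper likewise reduces to the affine case by choosing a smooth affine hypercover $S_\bullet \to \mls S$ together with an \'etale refinement of $X\times _{\mls S}S_\bullet $ by affines, uses exactly the cartesianness of the diagonal squares (and \'etaleness of the refinement maps) to identify the pullbacks of $\mls O_X\otimes ^{\mathbf{L}}_{\Delta _X^{-1}\mls O_I}\mls O_X$ levelwise, and then concludes with the bar-resolution comparison as in \cite[9.1.3]{W2}. The only organizational difference is that the paper performs the \'etale localization on $X\times _{\mls S}S_\bullet $ \emph{after} the hypercover step (note that $X$ affine does not force each $X_n$ to be affine unless $\mls S$ has affine diagonal), but your \'etale-locality argument applies verbatim at each simplicial level, so this is a matter of ordering rather than a gap.
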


\begin{proof}
To ease notation write $P_{X/\mls S}$ for $\mls O_X\otimes ^{\mathbf{L}}_{\Delta _X^{-1}\mls O_I}\mls O_X$, viewed as an object of $D(\mls O_X)$ through the action of~$\mls O_X$ on the left factor.

    Consider a smooth affine hypercover $S_\bullet := \Sp (R_\bullet )\rightarrow \mls S$, and let $X'_\bullet := \Sp (B_\bullet )\rightarrow X_{S_\bullet }$ be an \'etale  refinement of the hypercover $q\colon X_{S_\bullet }\rightarrow X$ by affines, so we have a commutative diagram
    $$
    \xymatrix{
    \Sp (B_\bullet )\ar[r]\ar[d]& X\ar[d]\\
    \Sp (R_\bullet )\ar[r]& \mls S\rlap{,}}
    $$
    where the horizontal arrows are smooth hypercovers and all of the maps $\Sp (B_n)\rightarrow \Sp (R_n)\times _{\mls S}X$ are \'etale.

    We then obtain a commutative diagram
    $$
    \xymatrix{
    X_\bullet '\ar[d]\ar[r]^-q& X_{S_\bullet }\ar[d]\ar[r]^-{\pi }& X\ar[d]\\
    X'_\bullet \times _{S_\bullet }X'_\bullet \ar[r]^-Q& X_{S_\bullet }\times _{S_\bullet }X_{S_\bullet }\ar[r]\ar[d]^-{p_1}& X\times _{\mls S}X\ar[d]^-{p_1}\\
    & X_{S_\bullet }\ar[r]\ar[d]& X\ar[d]\\
    & S_\bullet \ar[r]& \mls S\rlap{,}}
    $$
    where the squares in the right column are Cartesian and the  maps $q$ and $Q$ are \'etale.  It follows from this that the natural maps
    $$
    \pi ^*P_{X/\mls S}\lra P_{X_{S_\bullet }/S_\bullet }, \ \ q^*P_{X_{S_\bullet }/S_\bullet }\lra P_{X'_\bullet /S_\bullet }
    $$
    are isomorphisms. 

    
    In particular, by cohomological descent it suffices to construct an isomorphism
    $$
    P_{X'_\bullet /S_\bullet }\simeq \mcH _{X_\bullet /S_\bullet }|_{X'_\bullet }.
    $$
This we can do using the usual comparison as in \cite[Lemma~9.1.3]{W2}.  Namely, for each $n$ we can consider the (unnormalized) bar resolution $\mc B_{R_n}(B_n, B_n)$, which is a projective resolution of $B_n$ as a $(B_n, B_n)$-bimodule and for which $\mc B_{R_n}(B_n, B_n)\otimes _{B_n\otimes _{R_n}B_n}B_n=C^*_{B_n/R_n}$.  The complex $\mc B_{R_n}(B_n,B_n)$ is functorial in $n$, as is the identification with $C^*_{B_n/R_n}$.  It follows that we get an explicit complex $\mc B_{R_\bullet }(B_\bullet , B_\bullet )\otimes _{B_\bullet \otimes B_\bullet }B_\bullet $ on $X'_{\bullet }$ representing $P_{X'_\bullet /S_\bullet }$, and this complex is identified with $C^*_{B_\bullet /R_\bullet }$.
\end{proof}

The following corollary, together with Section~\ref{P:2.5}, often lets one describe log Hochschild cohomology more concretely using toric stacks.

\begin{cor} Let $\mls S'\rightarrow \mls S$ be an \'etale morphism of algebraic stacks with quasi-compact diagonal, and suppose $X\rightarrow \mls S'$ is a flat morphism from a quasi-compact algebraic space.  Then the natural map
$$
\mcH _{X/\mls S}\lra \mcH_{X/\mls S'}
$$
is an isomorphism.
\end{cor}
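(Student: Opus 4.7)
The plan is to reduce the claim to a statement about sheaves on the small étale site of $X$ using Theorem~\ref{T:4.2}. Let $I := X\times _{\mls S}X$ and $I' := X\times _{\mls S'}X$, with diagonals $\Delta :X\to I$ and $\Delta ':X\to I'$, and let $\pi :I'\to I$ be the natural morphism over $X\times X$. By Theorem~\ref{T:4.2} applied on each side, there are canonical isomorphisms
$$
\mcH _{X/\mls S}\simeq \mls O_X\otimes ^{\mathbf{L}}_{\Delta ^{-1}\mls O_I}\mls O_X,\qquad \mcH _{X/\mls S'}\simeq \mls O_X\otimes ^{\mathbf{L}}_{(\Delta ')^{-1}\mls O_{I'}}\mls O_X.
$$
Tracing through the construction of Theorem~\ref{T:4.2} (by pulling back a smooth affine hypercover of $\mls S$ to one of $\mls S'$, as in \ref{P:3.5}) shows that the natural comparison map $\mcH _{X/\mls S}\to \mcH _{X/\mls S'}$ corresponds, under these identifications, to the map of derived tensor products induced by the morphism of $\mls O_X$-bialgebras $\Delta ^{-1}\mls O_I\to (\Delta ')^{-1}\mls O_{I'}$ coming from $\pi $. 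It therefore suffices to show that this map of sheaves on the small étale site $X_\et $ is an isomorphism.

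The key geometric input is that $\pi :I'\to I$ is étale. I would obtain this from the fiber product identity
$$
I'\simeq I\times _{\mls S'\times _{\mls S}\mls S'}\mls S',
$$
where $\mls S'\to \mls S'\times _{\mls S}\mls S'$ is the relative diagonal $\Delta _{\mls S'/\mls S}$ and $I\to \mls S'\times _{\mls S}\mls S'$ is the map induced by the two factorizations of $X\to \mls S$ through $\mls S'$ together with the tautological $2$-isomorphism on $I$. Since $\mls S'\to \mls S$ is étale, both projections $\mls S'\times _{\mls S}\mls S'\to \mls S'$ are étale, and each admits $\Delta _{\mls S'/\mls S}$ as a section; by the $2$-out-of-$3$ property for étale morphisms, $\Delta _{\mls S'/\mls S}$ itself is étale, hence so is its base change $\pi $.

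To conclude, I would check the map $\Delta ^{-1}\mls O_I\to (\Delta ')^{-1}\mls O_{I'}$ on stalks at a geometric point $\bar x$ of $X$. These stalks are the strictly henselian local rings of $I$ at $\Delta (\bar x)$ and of $I'$ at $\Delta '(\bar x)$. Because $\Delta $ and $\Delta '$ are sections of the respective projections to $X$, the residue fields at these two points each coincide canonically with $k(\bar x)$; thus $\pi $ is étale and induces the identity on residue fields at $\Delta '(\bar x)$, and any such morphism induces an isomorphism on strict henselizations. The main subtlety is really just verifying the fiber product identity and, through it, the étaleness of $\pi $; once these are in hand, both the reduction via Theorem~\ref{T:4.2} and the stalk comparison are formal.
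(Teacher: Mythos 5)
Your proof is correct and follows essentially the same route as the paper: both reduce the statement via Theorem~\ref{T:4.2} to the fact that $X\times_{\mls S'}X\to X\times_{\mls S}X$ is \'etale (obtained from the \'etaleness of the relative diagonal $\Delta_{\mls S'/\mls S}$). The only difference is cosmetic: the paper concludes by noting that the induced section $X\to W:=X\times_{X\times_{\mls S}X}(X\times_{\mls S'}X)$ is an open immersion and restricting along it, whereas you conclude by comparing strict henselizations at geometric points of the diagonal -- two equivalent ways of seeing that $\Delta^{-1}\mls O_I\to(\Delta')^{-1}\mls O_{I'}$ is an isomorphism.
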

\begin{proof}
    Consider the commutative diagram
    $$
    \xymatrix{
    X\ar@/^2pc/[rr]^-{\id}\ar[rd]_{\Delta '}\ar[r]^-s& W\ar[d]^-\delta \ar[r]^-q& X\ar[d]^-\Delta \\
    & X\times _{\mls S'}X\ar[r]& X\times _{\mls S}X\rlap{,}}
    $$
    where $W$ is defined to make the square Cartesian.  Then $q$ is \'etale and $s$ is a section of $q$.  It follows that $s$ is an open immersion.  Therefore, 
    $$
    q^{-1}(\mls O_X\otimes _{\Delta ^{-1}\mls O_{X\times _{\mls S}X}}^{\mathbf{L}}\mls O_X)\simeq \mls O_W\otimes ^{\mathbf{L}}_{\delta ^{-1}\mls O_{X\times _{\mls S'}X}}\mls O_W,
    $$
    and, restricting these along the open immersion $s$, we get $\mls O_X\otimes ^{\mathbf{L}}_{\Delta ^{\prime -1}\mls O_{X\times _{\mls S'}X}}\mls O_X$. From this and Theorem~\ref{T:4.2}, the result follows.
\end{proof}

\begin{rem} In the case of a morphism of fine log schemes $(X, M_X)\rightarrow (S, M_S)$ with associated map $X\rightarrow \logs _{(S, M_S)}$, the resulting diagonal map $X\rightarrow X\times _{\logs _{(S, M_S)}}X$ is the exactification of the diagonal as discussed in \cite[Appendix A]{OlssonTowards}.  In the situation of Section~\ref{P:2.4}, when one has a chart, one can also consider the diagonal map $X\rightarrow X\times _{\Theta }X$.  Since $\Theta \rightarrow \logs _{(S, M_S)}$ is \'etale by Section~\ref{P:2.5}, the natural map $X\times _{\Theta }X\rightarrow X\times _{\logs _{(S, M_S)}}X$ is also \'etale, and for most purposes one can work with  the diagonal either over $\Theta $ or over $\logs _{(S, M_S)}$.  The fiber product $X\times _{\Theta }X$ is the log diagonal defined using a frame in \cite{MR2102698}.
\end{rem}

\begin{example} Let $X/k$ be a smooth scheme with a smooth divisor $D\subset X$.  Let $M_X$ denote the log structure associated to $D$ and the associated map $M_X\colon X\rightarrow \logs $ to the stack of log structures over $k$.  As discussed in Example~\ref{E:2.3b}, this map factors through $[\mathbb{A}^1/\mathbb{G}_m]\subset \logs $, and the corresponding line bundle with map to $\mls O_X$ is simply $\mls O_X(-D)\hookrightarrow \mls O_X$.  The fiber product $X\times _{\logs }X\simeq  X\times _{[\mathbb{A}^1/\mathbb{G}_m]}X$ is therefore the functor which to a scheme $(f_1, f_2)\colon T\rightarrow X\times X$ associates the set of isomorphisms $\sigma \colon f_1^*\mls O_X(-D)\rightarrow f_2^*\mls O_X(-D)$ compatible with the given maps to $\mls O_T$.  This can be described more classically as the complement in the blowup $B\rightarrow X\times X$ of $D\times D\subset X\times X$ of the strict transforms of the divisors $D\times X$ and $X\times D$.  If $\delta \colon X\hookrightarrow B$ is the unique lift of the diagonal $\Delta \colon X\hookrightarrow X\times X$, then it follows from a calculation in local coordinates that the induced map $\Omega ^1_X = \Delta ^*I_\Delta \rightarrow \delta ^*I_\delta $ between the pullbacks of the ideal sheaves identifies $\delta ^*I_\delta $ with $\Omega ^1_{(X, M_X)}$, and the log Hochschild homology of $(X, M_X)$ is locally the exterior algebra on $\Omega ^1_{(X, M_X)}$.  Furthermore, the natural map from the ordinary Hochschild homology of $X$ to the log Hochschild homology is locally described by the map $\Omega ^1_X\rightarrow \Omega ^1_{(X, M_X)}$.
\end{example}

\section{Alternate description in the case of affine diagonal}\label{S:section4}

\subsection{}\label{P:1.1} Let $k$ be a commutative ring with unit, let $\mls S$ be an algebraic stack over $k$, and assume the diagonal
$$
\Delta \colon \mls S\lra \mls S\times _{\Sp (k)}\mls S
$$
is affine.  Let $f\colon X\rightarrow \mls S$ be a morphism from a quasi-compact algebraic space $X$ to $\mls S$.  In this case we can define a complex of $\mls O_X$-modules $\mc C^*_{X/\mls S}$ representing the object $\mcH _{X/\mls S}\in D(X, \mls O_X)$, generalizing the definition of $\mc C^*_{X/R}$ for a scheme over a ring $R$.

\subsection{} For $n\geq 0$ let $X_{[n]}$ denote the $(n+1)$-fold fiber product
$$
 \overbrace{X\times _{\mls S}X\cdots X\times _{\mls S}X}^{n+1}.
$$
If $X= \Sp (A)$ is affine, then, since the diagonal of $\mls S$ is affine, the scheme $X_{[n]}$ is also affine and we let $C^n_{A/\mls S}$ denote $\Gamma (X_{[n]}, \mls O_{X_{[n]}})$ viewed as an $A$-module via the first projection.

For $i=0, \dots, n-1$ let 
$$
d_i\colon X_{[n-1]}\lra X_{[n]}
$$
denote the map induced by the diagonal $X\rightarrow X\times _{\mls S}X$ in the $\supth{i}$ position and the identity on the other factors, and let $d_n\colon X_{[n-1]}\rightarrow X_{[n]}$ denote the map given on scheme-valued points by
$$
(x_0, \dots, x_{n-1})\longmapsto (x_0, x_1, \dots, x_{n-1}, x_0).
$$
Also, for $i=1, \dots, n$ let 
$$
s_i\colon X_{[n]}\lra X_{[n-1]}
$$
denote the projecting forgetting the $\supth{i}$ coordinate.

In the affine case these maps give the $A$-modules  $C^n_{A/\mls S}$ the structure of a simplicial $A$-module.  We let $C^*_{A/\mls S}$ denote the associated complex of $A$-modules.  In this way we get a complex of presheaves 
$$
A\longmapsto C^*_{A/\mls S}, 
$$
and we let $\mc C^*_{X/\mls S}$ denote the associated complex of sheaves of $\mls O_X$-modules.

The formation of the complex $\mc C^*_{X/\mls S}$ is functorial in the morphism $X\rightarrow \mls S$, and in the case when $\mls S$ is a scheme, the complex $\mc C^*_{X/\mls S}$ agrees with that defined in Section~\ref{P:3.3}.

\begin{rem} As in the case of schemes, the terms of the complex $\mc C^*_{X/\mls S}$ are not quasi-coherent.  For example, if $X = \Sp (A)\rightarrow \mls S = \Sp (B)$ is a morphism of affine schemes and $n\geq 1$, then $C^n_{A/B} = A\otimes _B\cdots \otimes _BA$ (the $(n+1)$-fold tensor product).  For an element $f\in A$ with localization $A_f$, we then have $C^n_{A/B}\otimes _AA_f\simeq A_f\otimes _BA\cdots \otimes _BA$ ($A_f$ in the first factor and $A$ in the other factors), which typically is not equal to the $(n+1)$-fold tensor product of $A_f$ over $B$.
\end{rem}

\subsection{} 
If $S_\bullet \rightarrow \mls S$ is a smooth hypercover with each $S_n = \Sp (R_n)$ affine, and if $\pi _\bullet \colon X_\bullet \rightarrow X$ is the associated hypercover of $X$, we  get by functoriality a morphism
\begin{equation}\label{E:4.4.1}
\pi _\bullet ^{-1}\mc C^*_{X/\mls S}\lra \mc C^*_{X_\bullet /R_\bullet }.
\end{equation}

\begin{prop} The induced map 
$$
\mc C^*_{X/\mls S}\lra \mcH _{X/\mls S} = R\pi _*\mc C^*_{X_\bullet /R_\bullet }
$$
in $D(X, \mls O_X)$ is an isomorphism.
\end{prop}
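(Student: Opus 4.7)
The approach is to reduce to the case $X = \Sp (A)$ is affine, identify the pullback of $\mc C^*_{X/\mls S}$ to the hypercover $X_\bullet $ with $\mc C^*_{X_\bullet /R_\bullet }$, and then invoke the uniqueness part of Proposition~\ref{P:descent}.  For the first reduction: the assertion is local on the \'etale site of $X$, and both sides of the natural map---together with the map itself---are compatible with \'etale base change $U\rightarrow X$, so I may assume $X = \Sp (A)$ is affine.

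Next, I would identify $\pi _\bullet ^*\mc C^*_{X/\mls S}$ with $\mc C^*_{X_\bullet /R_\bullet }$ in $D(X_\bullet , \mls O_{X_\bullet })$.  The geometric heart of the comparison is the identification
$$
(X_m)_{[n]} \ = \ \overbrace{X_m\times _{R_m}\cdots \times _{R_m}X_m}^{n+1} \ \simeq \ X_{[n]}\times _{\mls S}S_m \ \simeq \ X_{[n]}\times _XX_m,
$$
which follows from $X_m = X\times _{\mls S}S_m$ and associativity of fiber products.  Since $\mls S$ has affine diagonal the first projection $X_{[n]}\rightarrow X$ is affine, and passing to strict henselizations at a geometric point $\bar y$ of $X_m$ lying over $\bar x$ of $X$, one checks that the stalks of $\pi _m^{*}\mc C^n_{X/\mls S}$ and $\mc C^n_{X_m/R_m}$ agree.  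Compatibility with the simplicial structure of $X_\bullet $ and with the Hochschild face and degeneracy maps (including the ``wraparound'' map $d_n$) is then verified by inspection, so the natural map \eqref{E:4.4.1} becomes, after this identification, an isomorphism on stalks, hence an isomorphism of sheaves on $X_\bullet $.

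Finally, I would conclude using Proposition~\ref{P:descent}.  By Theorem~\ref{T:4.2} the cohomology sheaves of $\mc C^*_{X/\mls S}$ are the Hochschild homology sheaves of $X/\mls S$ and are therefore quasi-coherent, so $\mc C^*_{X/\mls S}$ lies in $D^-_{\text{qcoh}}(X, \mls O_X)$.  Combining the previous step with the uniqueness statement of Proposition~\ref{P:descent}, we obtain a canonical identification $\mc C^*_{X/\mls S}\simeq \mcH _{X/\mls S} = R\pi _{*}\mc C^*_{X_\bullet /R_\bullet }$, realized by the natural map under consideration.

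The main obstacle is the second step: because the individual terms $\mc C^n_{X/\mls S}$ are not quasi-coherent as $\mls O_X$-modules, the termwise identification cannot be handled by a straightforward qc-sheaf pullback computation and instead requires stalk-level analysis; moreover the compatibility with all simplicial face maps, degeneracies, and especially the Hochschild wraparound map $d_n$ involves some bookkeeping.  The key geometric input throughout remains the identification $(X_m)_{[n]}\simeq X_{[n]}\times _XX_m$.
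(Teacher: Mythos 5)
Your strategy (pull back to the hypercover, identify with $\mc C^*_{X_\bullet /R_\bullet }$, then descend) could in principle work, but the two load-bearing steps have genuine problems. First, the claimed termwise stalk identification $\pi _m^*\mc C^n_{X/\mls S}\simeq \mc C^n_{X_m/R_m}$ is false. The geometric identity $(X_m)_{[n]}\simeq X_{[n]}\times _XX_m$ is correct, but it does not compute the stalks of the sheafified complex: the stalk of $\mc C^n_{X_m/R_m}$ at $\bar y$ is $\Gamma \bigl(\Sp (\mls O_{X_m,\bar y})\times _{S_m}\cdots \times _{S_m}\Sp (\mls O_{X_m,\bar y})\bigr)$, i.e.\ \emph{all} $n+1$ factors get strictly henselized, whereas $\pi _m^*$ only replaces the first factor of the stalk of $\mc C^n_{X/\mls S}$ by $\mls O_{X_m,\bar y}$. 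Already for $\mls S=\Sp (k)$, $X=\Sp (k)$, $S_m=\mathbb{A}^1_k$ one gets $k[u]^{\text{sh}}$ on one side and $k[u]^{\text{sh}}\otimes _{k[u]}k[u]^{\text{sh}}$ on the other. What is true is only that the map is a \emph{quasi-isomorphism}, and that is not a consequence of the fiber-product bookkeeping: it requires genuine Hochschild-theoretic input (flat/\'etale base change for Hochschild homology, as in \cite[0.4]{WG}). Second, your quasi-coherence claim is circular: Theorem \ref{T:4.2} concerns $\mcH _{X/\mls S}$ (defined via the hypercover) and says nothing about $\mc C^*_{X/\mls S}$; asserting that the cohomology sheaves of $\mc C^*_{X/\mls S}$ "are the Hochschild homology sheaves" is essentially the proposition you are trying to prove. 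Without quasi-coherence the uniqueness/descent step of \ref{P:descent} (Tag 0DC1) does not apply, since the terms of $\mc C^*_{X/\mls S}$ are not quasi-coherent and the unit $K\rightarrow R\pi _*L\pi ^*K$ can fail to be an isomorphism for general $K$.

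For contrast, the paper avoids both issues by arguing in the pushforward direction. After reducing to $X=\Sp (A)$ affine, it uses that the diagonal of $\mls S$ is affine to write $X_\bullet =\Sp (A_\bullet )$, invokes \cite[0.4]{WG} to identify $C^*_{A_\bullet /R_\bullet }$ with $Re_{\bullet *}\mcH _{X_\bullet /R_\bullet }$, and then reduces to showing $C^n_{A/\mls S}\rightarrow \int C^n_{A_\bullet /R_\bullet }$ is a quasi-isomorphism for each fixed $n$. The key observation is that $C^n_{A_\bullet /R_\bullet }=\Gamma (X_{[n]}\times _{\mls S}S_\bullet ,\mls O)$, so this is cohomological descent for the quasi-coherent sheaf $\mls O_{X_{[n]}}$ on the \emph{affine} scheme $X_{[n]}$ along the smooth hypercover $X_{[n]}\times _{\mls S}S_\bullet $ --- no quasi-coherence statement about the Hochschild complex itself is needed, and no termwise sheaf isomorphism on $X_\bullet $ is claimed. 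If you want to keep your pullback-then-descend architecture, you would need to (a) replace the stalkwise isomorphism by a quasi-isomorphism proved via base change for Hochschild homology, and (b) prove quasi-coherence of $\mls H^*(\mc C^*_{X/\mls S})$ independently, e.g.\ by identifying $C^*_{A/\mls S}$ with a bar complex computing $A\otimes ^{\mathbf{L}}_{\Gamma (\Sp (A)\times _{\mls S}\Sp (A),\mls O)}A$ and using flat base change for Tor.
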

\begin{proof}
It suffices to show that if $X = \Sp (A)$ is an affine scheme, then the map
$$
C^*_{A/\mls S}\lra R\Gamma (X, \mcH _{X/\mls S})
$$
is an isomorphism in the derived category of $A$-modules.   

Since the diagonal of $\mls S$ is affine, each $X_n$ is affine, so $X_\bullet $ is equal to the simplicial scheme obtained by applying $\Sp $ to a cosimplicial $A$-algebra $A_\bullet $.  Let $\Mod_{A_\bullet }^\Delta $ denote the category of modules over $A_\bullet $, so an object $M_\bullet \in \Mod_{A_\bullet }^\Delta $ is a cosimplicial $A$-module equipped with an action of $A_\bullet $.
We then have  a commutative diagram of derived categories
$$
\xymatrix{
D\left(X_\bullet , \mls O_{X_\bullet }\right)\ar[r]^-{Re_{\bullet *} }\ar[d]^-{R\pi _*}& D\left(\Mod_{A_\bullet }^\Delta\right)\ar[d]^-\int \\
D(X, \mls O_X)\ar[r]^-{R\Gamma }& D(\Mod_A)\rlap{,}}
$$
where
$$
\int \colon D\left(\Mod_{A_\bullet }^\Delta \right)\lra D^+(\Mod_A)
$$
is the functor taking the total complex and $Re_\bullet $ is the derived functor of the level-wise global section functor.

By \cite[Corollary~0.4]{WG} the natural map in $D(\Mod_{A_\bullet }^\Delta )$
$$
C_{A_\bullet /R_\bullet }^*\lra Re_{\bullet *} \mcH _{X_\bullet /R_\bullet }
$$
is an isomorphism.  It therefore suffices to show that the natural map
$$
C_{A/\mls S}^*\lra \int C^*_{A_\bullet /R_\bullet }
$$
is an isomorphism in $D(\Mod_{A})$.  For this in turn it suffices to show that each of the maps
$$
C_{A/\mls S}^n\lra \int C^n_{A_\bullet /R_\bullet }
$$
is a quasi-isomorphism.  This follows from observing that $C^n_{A_\bullet /R_\bullet }$ is isomorphic to the complex obtained by taking global sections of the simplicial affine scheme given by the fiber product
\begin{equation*}\pushQED{\qed}
X_{[n]}\times _{\mls S}S_\bullet .
\qedhere \popQED
	\end{equation*}
\renewcommand{\qed}{}     
\end{proof}

\section{Interlude on differentials}\label{S:section5.5}

\subsection{} If $X\rightarrow S$ is a morphism of schemes and $E$ is a quasi-coherent sheaf on $X$, then the universal derivation $d\colon \mls O_X\rightarrow \Omega ^1_{X/S}$ establishes a bijection between the $\mls O_X$-module of derivations $\partial \colon \mls O_X\rightarrow E$ and $\mls O_X$-linear maps $\Omega ^1_X\rightarrow E$.

For a morphism $X\rightarrow \mls S$ from a scheme to an algebraic stack, however, the $\mls O_X$-module of maps $\Omega ^1_{X/\mls S}\rightarrow E$ can no longer be described easily using maps $\mls O_X\rightarrow E$ (at least if we only consider the small Zariski or \'etale topology).

\begin{example}\label{E:5.2}
Let $k$ be a field, and consider the map $\Sp (k)\rightarrow B\mathbb{G}_{m, k}$ corresponding to the trivial torsor.  In this case the $k$-space  $\Omega ^1_{\Sp (k)/B\mathbb{G}_m}$ is $1$-dimensional, and the derivation $d\colon k\rightarrow \Omega ^1_{\Sp (k)/B\mathbb{G}_m}$ is the zero map.  For a $k$-vector space $E$, the space $\Hom_k(\Omega ^1_{\Sp (k)/B\mathbb{G}_m}, E)$ is therefore isomorphic to $E$, whereas 
the space of $k$-linear derivations $k\rightarrow E$ is  $0$.
\end{example}

\subsection{} We can address this issue by considering a bigger topology, which is intermediate between the big and small \'etale site, as follows.  We call this topology the \emph{relative \'etale topology}.

Let $f\colon X\rightarrow \mls S$ be a morphism from an algebraic space to an algebraic stack.  Let $\mls C_{X/\mls S}$ be the category of $2$-commutative diagrams
\begin{equation}\label{E:5.1}
\xymatrix{
U\ar[d]\ar[r]& X\ar[d]\\
S\ar[r]& \mls S,}
\end{equation}
where $S\rightarrow \mls S$ is a smooth morphism from  a scheme and the induced map $U\rightarrow X_S:= X\times _{\mls S}S$ is \'etale.   We usually abbreviate the data of such an object by $(S, U\rightarrow X_S)$. A morphism
$$
(S', U'\rightarrow X_{S'})\lra (S, U\rightarrow X_S)
$$
in $\mls C_{X/\mls S}$ is given by a $2$-commutative diagram
$$
\xymatrix{
& U\ar[dd]\ar[rd]& \\
U'\ar[dd]\ar[rr]\ar[ru]&& X\ar[dd]\\
& S\ar[rd]& \\
S'\ar[rr]\ar[ru]&& \mls S\rlap{.}}
$$

We call a collection of morphisms
$$
\{(S_i, U_i\rightarrow X_{S_i})\lra (S, U\rightarrow X_S)\}
$$
a covering if the collections $\{S_i\rightarrow S\}$ and $\{U_i\rightarrow U\}$ are \'etale coverings.  We let $\Cov(S, U\rightarrow X_S)$ denote the set of coverings of $(S, U\rightarrow X_S)$.

\begin{lem} The category $\mls C_{X/\mls S}$ together with the coverings described above form a site $($in the sense of\, \cite[\href{https://stacks.math.columbia.edu/tag/03NF}{Tag 03NF}]{stacks-project}$)$.
\end{lem}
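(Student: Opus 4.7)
The plan is to verify the three axioms for a site in the sense of \href{https://stacks.math.columbia.edu/tag/03NF}{Tag 03NF}: (1) singleton families of isomorphisms are coverings, (2) a composition of coverings with coverings is a covering, and (3) coverings admit pullbacks along arbitrary morphisms, and these pullbacks are again coverings.

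Axioms (1) and (2) will be immediate. An isomorphism $(S', U' \to X_{S'}) \to (S, U \to X_S)$ induces isomorphisms of schemes $S' \to S$ and $U' \to U$, each a singleton étale covering. For (2), the transitivity of étale covers of schemes, applied separately on the $S$-coordinate and the $U$-coordinate, yields the result.

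The real work is axiom (3). Given a covering $\{(S_i, U_i \to X_{S_i}) \to (S, U \to X_S)\}$ and an arbitrary morphism $(T, V \to X_T) \to (S, U \to X_S)$, I will propose as the fiber product in $\mls C_{X/\mls S}$ the object
$$
(S_i \times_S T,\ U_i \times_U V \to X_{S_i \times_S T}),
$$
where both fiber products are ordinary scheme (or algebraic space) fiber products. Two compatibilities must be verified. First, the structure map $S_i \times_S T \to \mls S$ must be smooth: this follows by factoring it as $S_i \times_S T \to T \to \mls S$, where the first arrow is étale as a base change of $S_i \to S$ and the second is smooth by hypothesis. Second, and this is the main technical point, the map $U_i \times_U V \to X_{S_i \times_S T}$ must be étale. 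Using the identification $X_{S_i \times_S T} = X_{S_i} \times_{X_S} X_T$, I will invoke the standard fact that a morphism between two objects étale over a common base is itself étale (its graph is a section of an étale projection, hence an open immersion). Indeed, $U_i \times_U V \to X_T$ is the composition of the étale base change $U_i \times_U V \to V$ with $V \to X_T$, and $X_{S_i \times_S T} \to X_T$ is étale as a base change of $X_{S_i} \to X_S$, so the induced morphism between them is étale. The universal property of this fiber product in $\mls C_{X/\mls S}$ then follows from the universal properties of its two components.

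Finally, I will check that $\{(S_i \times_S T, U_i \times_U V \to X_{S_i \times_S T}) \to (T, V \to X_T)\}$ is indeed a covering, which amounts to stability of étale covers under base change applied coordinate-wise. The only step with any subtlety is the étaleness of the map to $X_{S_i \times_S T}$ in the fiber-product construction, but as noted it reduces to a classical property of étale morphisms.
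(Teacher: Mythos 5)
Your proposal is correct and follows essentially the same route as the paper: axioms (1) and (2) reduce immediately to the corresponding facts for \'etale sites, and for axiom (3) you construct the same candidate fiber product $(S_i\times_S T,\ U_i\times_U V\rightarrow X_{S_i\times_S T})$ using the identification $X_{S_i\times_S T}\simeq X_{S_i}\times_{X_S}X_T$. Your justification of the \'etaleness of $U_i\times_U V\rightarrow X_{S_i\times_S T}$ (as a morphism between two objects \'etale over $X_T$) is just a slightly more explicit version of the step the paper leaves implicit.
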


\begin{proof}
The axioms (1) and (2) in \cite[\href{https://stacks.math.columbia.edu/tag/03NF}{Tag 03NF}]{stacks-project} are immediate from the corresponding properties of the \'etale site of an algebraic space. To verify axiom (3), let $\{(S_i, U_i\rightarrow X_{S_i})\rightarrow (S, U\rightarrow X_S)\}$ be a covering, and let $(T, V\rightarrow X_T)\rightarrow (S, U\rightarrow X_S)$ be an arbitrary morphism.  Note that $X_{S_i}\times _{X_S}X_T\simeq X_{S_i\times _ST}$, so taking fiber products, we see that the morphism $U_i\times _UV\rightarrow X_{S_i}\times _{X_S}X_T\simeq X_{S_i\times _ST}$ is \'etale.  Therefore, 
$$
\left(S_i\times _ST, U_i\times _UV\rightarrow X_{S_i\times _ST}\right)
$$
is an object of $\mls C_{X/\mls S}$, which one verifies is the fiber product of $(S_i, U_i\rightarrow X_{S_i})$ and $(T, V\rightarrow X_T)$ over $(S, U\rightarrow X_S)$.  Furthermore, the collection
$$
\left\{\left(S_i\times _ST, U_i\times _UV\rightarrow X_{S_i\times _ST}\right)\lra (T, V\rightarrow X_T)\right\}
$$
is a covering, which verifies axiom (3) in \cite[\href{https://stacks.math.columbia.edu/tag/03NF}{Tag 03NF}]{stacks-project}.
\end{proof}

\subsection{}
For any smooth $S\rightarrow \mls S$ we have a functor
$$
\Et(X_S)\lra \mls C_{X/\mls S}, \quad (U\rightarrow X_S)\longmapsto (S, U\rightarrow X_S).
$$
This functor evidently takes coverings to coverings, and therefore if $F$ is a sheaf on $\mls C_{X/\mls S}$, we get by restriction a sheaf $F_S$ on the \'etale site of $X_S$.

\subsection{}
Let $\mls C^{\sim }_{X/\mls S}$ denote the topos of sheaves on $\mls C_{X/\mls S}$.  Let $\mls O_{X/\mls S}$ denote the sheaf which to any object $(S, U\rightarrow X_S)$ associates $\Gamma (U, \mls O_U)$ (the sheaf property follows immediately from the corresponding sheaf property for the \'etale site).  An $\mls O_{X/\mls S}$-module $E$ on $\mls C_{X/\mls S}$ is called \emph{quasi-coherent} if the restriction $E_S$ to $X_S$ is quasi-coherent for all smooth $S\rightarrow X_S$ and if, for any morphism $u\colon S'\rightarrow S$ of smooth $\mls S$-schemes, the induced map $u^*E_S\rightarrow E_{S'}$ of quasi-coherent sheaves on $X_{S'}$ is an isomorphism.

Let $\Qcoh(\mls C_{X/\mls S})$ denote the category of quasi-coherent sheaves on $\mls C_{X/\mls S}$, and let $\Qcoh(X)$ denote the category of quasi-coherent sheaves on $X$. There is a functor
\begin{equation}\label{E:5.6.1}
h\colon \Qcoh(X)\lra \Qcoh(\mls C_{X/\mls S})
\end{equation}
sending a quasi-coherent sheaf $E$ on $X$ to the sheaf on $\mls C_{X/\mls S}$ associating to $(S, U\rightarrow X_S)$ the sections $\Gamma (U, E_S)$, where $E_S$ denotes the pullback of $E$ to $X_S$.  To ease notation, if $E$ is a quasi-coherent sheaf on $X$, we sometimes write $E_{X/\mls S}$ for $h(E)$.

\begin{lem}\label{L:5.7.1} The functor \eqref{E:5.6.1} is an equivalence of categories.
\end{lem}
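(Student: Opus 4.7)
The strategy is to construct an explicit quasi-inverse $g\colon \text{Qcoh}(\mls C_{X/\mls S}) \to \text{Qcoh}(X)$ by smooth descent along a chosen smooth cover of $\mls S$, and then verify that the two compositions are canonically identified with the identity functors.

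Fix a smooth surjection $\pi_0\colon S_0 \to \mls S$ from a scheme $S_0$, inducing a smooth surjection $\pi_{X,0}\colon X_{S_0} \to X$ of algebraic spaces. Given $F \in \text{Qcoh}(\mls C_{X/\mls S})$, its restriction $F_{S_0}$ is a quasi-coherent sheaf on $X_{S_0}$ by hypothesis. Choose a smooth scheme cover $T_2 \to S_0 \times_{\mls S} S_0$ and a smooth scheme cover $T_3 \to S_0 \times_{\mls S} S_0 \times_{\mls S} S_0$ compatible with the three projections to $T_2$. The projections $p_0, p_1\colon T_2 \to S_0$ are morphisms of smooth $\mls S$-schemes, so the pullback compatibility built into the definition of quasi-coherence furnishes canonical isomorphisms $p_0^* F_{S_0} \simeq F_{T_2} \simeq p_1^* F_{S_0}$, giving a descent datum on $F_{S_0}$ relative to $\pi_{X,0}$. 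The cocycle condition on $X_{T_3}$ is forced in the same way by the three projections $T_3 \to T_2$. Smooth descent for quasi-coherent sheaves on algebraic spaces then produces $g(F) \in \text{Qcoh}(X)$ with $\pi_{X,0}^* g(F) \simeq F_{S_0}$ intertwining the descent datum. A cofinality argument --- dominating two choices $S_0, S_0'$ by a smooth scheme cover of $S_0 \times_{\mls S} S_0'$ --- shows $g(F)$ is canonically independent of all choices, so $g$ is a well-defined functor.

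For $g \circ h \simeq \text{id}$: given $E \in \text{Qcoh}(X)$, the restriction $h(E)_{S_0}$ is by construction $\pi_{X,0}^* E$ and the descent datum just produced is the canonical one, so $g(h(E)) \simeq E$. For $h \circ g \simeq \text{id}$: setting $E = g(F)$, both $h(E)$ and $F$ are quasi-coherent on $\mls C_{X/\mls S}$ with canonically isomorphic restrictions to $X_{S_0}$. For an arbitrary smooth $\mls S$-scheme $S$, choose a smooth scheme cover $T \to S \times_{\mls S} S_0$. Pullback compatibility applied to the projections $T \to S$ and $T \to S_0$, for both sheaves, propagates the isomorphism $h(E)_{S_0} \simeq F_{S_0}$ to an isomorphism $h(E)_T \simeq F_T$ compatible with the descent datum relative to the smooth surjection $T \to S$; the latter descends to $h(E)_S \simeq F_S$ on $X_S$, and assembling these for varying $S$ identifies $h(E)$ and $F$ as sheaves on $\mls C_{X/\mls S}$.

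The main obstacle is the bookkeeping of the descent argument --- verifying the cocycle condition and the compatibilities between the various pullback isomorphisms carefully, and ensuring that enough scheme covers of the possibly algebraic-space products $S_0\times_{\mls S} S_0$, $S_0\times_{\mls S} S_0 \times_{\mls S} S_0$, and $S\times_{\mls S} S_0$ exist in order to stay inside the site $\mls C_{X/\mls S}$. No serious conceptual difficulty arises beyond this, since once the inverse functor is in hand both sides of the equivalence are determined by a quasi-coherent sheaf on $X_{S_0}$ together with descent data for the smooth cover $X_{S_0} \to X$.
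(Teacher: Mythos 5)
Your proof is correct and follows essentially the same route as the paper: there, too, the quasi-inverse is built by restricting a quasi-coherent sheaf on $\mls C_{X/\mls S}$ to a smooth presentation and invoking descent for quasi-coherent sheaves on the simplicial space $X_\bullet = X\times _{\mls S}S_\bullet $ (citing \cite[13.2.4]{LMB}), with the pullback-compatibility clause in the definition of quasi-coherence supplying the descent datum exactly as you describe. The only differences are in packaging: the paper phrases the descent simplicially and verifies $h\circ j\simeq \mathrm{id}$ via the pointwise formula $j(E)(V\rightarrow X)=\Gamma (\mls C_{V/\mls S}, v^*E)$, whereas you unwind the cocycle condition by hand and are somewhat more explicit about dominating the fiber products $S_0\times _{\mls S}S_0$ by scheme covers so as to remain inside the site.
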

\begin{proof}
Fix a smooth cover $S\rightarrow \mls S$ with $S$ an algebraic space, and let $S_\bullet \rightarrow \mls S$ be the associated simplicial presentation.  Let $X_\bullet \rightarrow X$ be the simplicial presentation $X\times _{\mls S}S_\bullet $ of $X$ obtained by base change.   We then have a simplicial object in $\mls C_{X/\mls S}$ given by
$$
[n]\longmapsto \left(S_n, \id\colon X_{S_n}\rightarrow X_{S_n}\right).
$$
Therefore, if $E$ is a quasi-coherent sheaf on $\mls C_{X/\mls S}$, we obtain a quasi-coherent sheaf, in the sense of \cite[Proposition~13.2.4]{LMB}, on the simplicial scheme $X_\bullet $ given by $E_{S_n}$ on $X_n = X_{S_n}$.  By \textit{loc.~cit.}~this quasi-coherent sheaf is induced by a unique quasi-coherent sheaf on $X$.  It follows that we get a functor
$$
j\colon \Qcoh(\mls C_{X/\mls S})\lra \Qcoh(X).
$$
Furthermore, it is immediate from the construction that $j\circ h$ is isomorphic to the identity functor.  The functor $j$ can be described without using a covering of $\mls S$ as follows.  For an \'etale $X$-scheme $v\colon V\rightarrow X$, we have a functor
$$
v\colon \mls C_{V/\mls S}\lra \mls C_{X/\mls S}, \quad (S,U\rightarrow V_S)\longmapsto (S, U\rightarrow V_S\rightarrow X_S).
$$
Composition with $v$ defines a functor
$$
v^*\colon  \mls C^{\sim }_{X/\mls S}\lra \mls C^{\sim }_{V/\mls S}.
$$
By the construction of $j$, for a quasi-coherent sheaf $E$ on $\mls C_{X/\mls S}$, we have 
$$
j(E)(v\colon V\rightarrow X) = \Gamma (\mls C_{V/\mls S}, v^*E)
$$
functorially in $V$.  From this it follows in particular that there is a natural map $hj(E)\rightarrow E$ which is an isomorphism by descent theory.
\end{proof}

\subsection{} There is also a functor
\begin{equation}\label{E:6.8.1}
\Liset (\mls S)\lra \mls C_{X/\mls S}, \quad S\longmapsto (S, \id\colon X_S\rightarrow X_S).
\end{equation}
This functor induces a morphism of topoi
$$
\pi \colon \mls C^{\sim }_{X/\mls S}\lra \mls S_{\liset}.
$$
Note that by general considerations the functor \eqref{E:6.8.1} induces a pair of adjoint functors $(\pi ^{-1}, \pi _*)$, but this is not automatically a morphism of topoi since $\Liset (\mls S)$ does not have finite projective limits (see \cite{Olssonsheaves}).  In this case, however, we can describe the functor $\pi ^{-1}$ as follows, showing directly that $\pi ^{-1}$ does in fact commute with finite projective limits.

Recall from \cite[Lemme~12.2.1]{LMB} that the category of sheaves on $\mls S_{\liset }$ can be described as the category of systems $(F_U, \theta _\varphi )$ consisting of a sheaf $F_U$ on $U_\et $ for every smooth morphism $U\rightarrow \mls S$ and a map $\theta _{\varphi }\colon \varphi ^{-1}F_U\rightarrow F_V$ for every $\mls S$-morphism $\varphi \colon V\rightarrow U$ of schemes smooth over $\mls S$.  The morphisms $\theta _{\varphi }$ are further required to satisfy the condition that $\theta _{\varphi }$ is an isomorphism if $\varphi $ is \'etale and a cocycle condition.

With this description of $\mls S_\liset $, the functor $\pi _*$ sends a sheaf $E$ on $\mls C_{X/\mls S}$ to the system of sheaves given by associating to $U\rightarrow \mls S$ the pushforward $f_{U*}E_U$  with the natural transition maps, where $f_U\colon X_U\rightarrow U$ is the base change of $U$, $E_U$ is the restriction of $E$ to the \'etale site of $X_U:= X\times _{\mls S}U$,  and we consider pushforward with respect to the \'etale topology.  

 From this it follows that $\pi ^{-1}$ sends a sheaf $F$ on $\mls S_\liset $ given by a system $(F_U, \theta _\varphi )$ to the sheaf on $\mls C_{X/\mls S}$ 
given by the collection of sheaves $f_U^{-1}F_U$.  In particular, since $f_U^{-1}$ commutes with finite projective limits, it follows that $\pi ^{-1}$ does as well.  Furthermore, from this description it also follows that  $\pi $ extends naturally to a morphism of ringed topoi, denoted by the same letter,
$$
\pi \colon (\mls C^{\sim }_{X/\mls S}, \mls O_{X/\mls S})\lra \left(\mls S_{\liset }, \mls O_{\mls S_{\liset }}\right).
$$

\subsection{}
The reason for introducing the site $\mls C_{X/\mls S}$ is that this enables us to describe the universal property of the relative de Rham complex $\Omega ^\bullet _{X/\mls S}$.

\begin{defn} A \emph{quasi-coherent sheaf of commutative differential graded algebras} is a quasi-coherent sheaf of graded commutative $\mls O_X$-algebras $A ^\bullet = \oplus _{p\geq 0}A^p$ equipped with a $\pi ^{-1}\mls O_{\mls S_\liset }$-linear map
$$
d\colon A_{X/\mls S}^\bullet \lra A_{X/\mls S}^\bullet 
$$
of sheaves on $\mls C_{X/\mls S}$ such that $d^2 = 0$ and that, for any two local sections $a\in A^p$ and $b\in A^q$, we have 
$$
d(a\cdot b) = (da)\cdot b+(-1)^padb.
$$
Morphisms of quasi-coherent sheaves of commutative differential graded algebras are defined to be morphisms of $\mls O_X$-algebras compatible with the differentials $d$.  We denote the category of quasi-coherent sheaves of commutative differential algebras by $\dga_{X/\mls S}$.
\end{defn}

\begin{rem} Note that the differential $d$ induces a map $A^\bullet \rightarrow A^\bullet $ on $X_{\et}$, but $d$ cannot be recovered from this map, as Example~\ref{E:5.2} illustrates.  Indeed, in this example we have $A^\bullet = k\oplus \Omega ^1_{k/B\mathbb{G}_m}$, and $\Omega ^1_{k/B\mathbb{G}_m}$ is the $1$-dimensional vector space with basis $\frac{dt}{t}$, where $t$ is a coordinate on $\mathbb{G}_m$.  This identification is obtained by considering the Cartesian square
$$
\xymatrix{
\Sp (k)\ar[d]& \mathbb{G}_m\ar[l]_-q\ar[d]^-q\\
B\mathbb{G}_m& \Sp (k)\ar[l]}
$$
and noting that the pullback map $q^*\colon \Omega ^1_{k/B\mathbb{G}_m}\rightarrow \Omega ^1_{\mathbb{G}_m/k}$ identifies $\Omega ^1_{k/B\mathbb{G}_m}$ with the $\mathbb{G}_m$-invariants in $\Omega ^1_{\mathbb{G}_m/k}$.  The map $k[t^\pm ]\rightarrow \Omega ^1_{\mathbb{G}_m/k} = q^*\Omega ^1_{k/B\mathbb{G}_m}$ is the usual differential; in particular, it is nonzero. Therefore, the map $d$ on $\mls C_{k/B\mathbb{G}_m}$ is nonzero, but the restriction of $d$ to the \'etale site of $\Sp (k)$ is $0$.
\end{rem}

\subsection{}
The de Rham complex $\Omega ^\bullet _{X/\mls S}$ is naturally viewed as an object of $\dga_{X/\mls S}$.  To avoid confusion of notation, let us write simply $\Omega ^1$ for the quasi-coherent sheaf on $X_{\et}$ of relative differentials of the morphism $X\rightarrow \mls S$, and $\Omega ^1_{X/\mls S}$ for the corresponding quasi-coherent sheaf on $\mls C_{X/\mls S}$. Similarly, we write $\Omega ^p$ for the $\supth{p}$ exterior power of $\Omega ^1$ and $\Omega ^p_{X/\mls S}$ for the corresponding sheaf on $\mls C_{X/\mls S}$ (which is also the $\supth{p}$ exterior power of $\Omega ^1_{X/\mls S}$).  Note that $\Omega ^p_{X/\mls S}$ is the sheaf on $\mls C_{X/\mls S}$ which associates to any $(S, U\rightarrow X_S)$ the module $\Gamma (U, \Omega ^p_{U/S})$.   The additional structure of the differential is simply obtained by taking the differential $d\colon \oplus _p\Omega ^p_{U/S}\rightarrow \oplus _p\Omega ^p_{U/S}$ for each $(S, U\rightarrow X_S)$.  

Note that the differential graded algebra $\Omega ^\bullet _{X/\mls S}$ can also be described as the de Rham complex of the morphism of sheaves of rings $\pi ^{-1}\mls O_{\mls S_\liset }\rightarrow \mls O_{X/S}$ on $\mls C_{X/\mls S}$.  From this and the universal property of the usual de Rham complex, we obtain the following. 

\begin{prop} The de Rham complex $\Omega ^\bullet _{X/\mls S}\in \dga_{X/\mls S}$ is an initial object in the category $\dga_{X/\mls S}$.
\end{prop}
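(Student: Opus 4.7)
The strategy is exactly the one indicated in the paragraph preceding the proposition: identify $\Omega^\bullet_{X/\mls S}$ as the de Rham complex of the morphism of sheaves of rings $\pi^{-1}\mls O_{\mls S_\liset}\to \mls O_{X/\mls S}$ on the ringed topos $(\mls C^{\sim}_{X/\mls S}, \mls O_{X/\mls S})$, and deduce initiality from the universal property of Kähler differentials in this topos-level setting.

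Given $A^{\bullet}\in \text{\rm dga}_{X/\mls S}$, the $\mls O_X$-algebra structure together with the equivalence of Lemma~\ref{L:5.7.1} supplies a canonical structure map $\eta:\mls O_{X/\mls S}\to A^0$, and the composite $\partial := d_A\circ \eta:\mls O_{X/\mls S}\to A^1$ (with $d_A$ the given differential on $A^\bullet$) is a $\pi^{-1}\mls O_{\mls S_\liset}$-linear derivation, by the Leibniz rule on $A^\bullet$ together with the assumed $\pi^{-1}\mls O_{\mls S_\liset}$-linearity of $d_A$. Factoring $\partial$ through the universal derivation $d:\mls O_{X/\mls S}\to \Omega^1_{X/\mls S}$ produces a unique $\mls O_{X/\mls S}$-linear map $\varphi_1:\Omega^1_{X/\mls S}\to A^1$; extending multiplicatively then yields a map of graded algebras $\varphi^{\bullet}:\Omega^{\bullet}_{X/\mls S}\to A^{\bullet}$, and compatibility with the differentials is verified on generators in degrees $\le 1$ and propagated by the Leibniz rule. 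Uniqueness follows from the same input: the degree-$0$ component is forced by the $\mls O_X$-algebra condition, the degree-$1$ component is forced by commutation with $d$ and the universal property of $\varphi_1$, and higher degrees are determined multiplicatively since $\Omega^{\bullet}_{X/\mls S}$ is generated by $\Omega^{\le 1}_{X/\mls S}$.

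The main obstacle — and the reason the site $\mls C_{X/\mls S}$ was introduced in the first place — is establishing the universal property of $\Omega^1_{X/\mls S}$ used in the middle step above: for every quasi-coherent $\mls O_{X/\mls S}$-module $E$, the natural map
$$
\text{\rm Hom}_{\mls O_{X/\mls S}}(\Omega^1_{X/\mls S}, E)\longrightarrow \text{\rm Der}_{\pi^{-1}\mls O_{\mls S_\liset}}(\mls O_{X/\mls S}, E),\qquad \varphi\mapsto \varphi\circ d,
$$
is bijective. Example~\ref{E:5.2} shows that the analogous claim fails on the small étale site of $X$, so the enlargement to $\mls C_{X/\mls S}$ is essential. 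I would prove it by evaluating on objects $(S, U\to X_S)$: a derivation there restricts to an $\mls O_S$-linear derivation $\mls O_U\to E_S|_U$ which factors uniquely through $\Omega^1_{U/S}$, and the resulting collection of local factorizations assembles into a single $\mls O_{X/\mls S}$-linear map of sheaves on $\mls C_{X/\mls S}$ by quasi-coherence of $E$ and $\Omega^1_{X/\mls S}$ together with the identification $\Omega^1_{X/\mls S}|_{(S, U\to X_S)} \simeq \Omega^1_{U/S}$, which is compatible across morphisms in $\mls C_{X/\mls S}$.
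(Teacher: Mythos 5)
Your proposal is correct and follows exactly the route the paper takes: the paper's proof is the single line ``this follows from the preceding discussion,'' where the preceding paragraph identifies $\Omega^\bullet_{X/\mls S}$ as the de Rham complex of the ring morphism $\pi^{-1}\mls O_{\mls S_\liset}\to \mls O_{X/\mls S}$ on $\mls C_{X/\mls S}$ and invokes the universal property of the usual de Rham complex. You have simply written out the details that the paper leaves implicit, including the object-by-object verification of the universal property of $\Omega^1_{X/\mls S}$, and these details are accurate.
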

\begin{proof} This follows from the preceding discussion.
\end{proof}

\begin{rem}
There is an alternate description of the universal property of $\Omega ^1_{X/\mls S}$ which can be described as follows (see also \cite[Section~17.11 to the end of Chapter~17]{LMB}).  We will not use this in what follows.

Let $E$ be a quasi-coherent sheaf on $X$, and let $\mls O_X[E]$ be the sheaf of algebras whose underlying sheaf of $\mls O_X$-modules is $\mls O_X\oplus E$ and algebra structure is given on local sections by
$$
(f, e)\cdot (f', e') = (ff', fe'+f'e).
$$
Define $X[E]$ to be the relative spectrum of $\mls O_X[E]$ over $X$.   Let $\Der_{\mls S}(X, E)$ denote the set of $\mls S$-morphisms $\delta \colon X[E]\rightarrow X$ such that the composition of $\delta $ with the closed immersion $j\colon X\rightarrow X[E]$ is the identity.  Recall here that an $\mls S$-morphism between two $\mls S$-spaces $a\colon U\rightarrow \mls S$ and $b\colon V\rightarrow \mls S$ is a pair $(h, \sigma )$, where $h\colon U\rightarrow V$ is a morphism of algebraic spaces and $\sigma \colon a\rightarrow b\circ h$ is an isomorphism.  

More generally, let 
 $\mls Der _{\mls S}(X, E)$ denote the sheaf on the small \'etale site of $X$ which to any $U\rightarrow X$ associates the set $\Der_{\mls S}(U, E|_U)$.  
The sheaf $\mls Der _{\mls X}(X, E)$ is naturally  a quasi-coherent sheaf.  This can be seen either by studying additional structure on $X[E]$ or by descent, as follows.  If $S\rightarrow \mls S$ is a smooth covering by a scheme, then by descent theory we have 
$$
\Der_{\mls S}(X, E) = \Eq\left(\Der_S\left(X_S, E|_{X_S}\right)\longrightrightarrows \Der_{S\times _{\mls S}S}\left(X_{S\times _{\mls S}S}, E|_{X_{S\times _{\mls S}S}}\right)\right),
$$
where $X_S$ and $X_{S\times _{\mls S}S}$ denote the base changes of $X$, and $\mls Der_X(X, E)$ is the sheaf obtained by descent from $\mls Der _{S}(X_S, E|_{X_S})$ with its natural descent datum.
 The derivation $d\colon \mls O_X\rightarrow \Omega ^1_{X/\mls S}$ can be upgraded to a derivation $\tilde d\in \Der_{\mls S}(X, \Omega ^1_{X/\mls S})$.  Indeed, if we identify $X[\Omega ^1_{X/\mls S}]$ with $X\times _{\mls S}X$ using the first projection, then the second projection defines $\tilde d\in \Der_{\mls S}(X, \Omega ^1_{X/\mls S})$.  By descent theory and the universal property of the derivation $d$ for algebraic spaces, it follows that $\tilde d$ defines an isomorphism
$$
\mls Der_{\mls S}(X, E)\simeq \mls Hom_{\mls O_X}\left(\Omega ^1_{X/\mls S}, E\right).
$$
\end{rem}

\section{Differential graded algebra structure on $\texorpdfstring{\boldsymbol{\mcH _{X/\mls S, *}}$}{HH\textunderscore \{X/S,*\}}}

\subsection{}\label{P:6.1} The sheaves on $\mls C_{X/\mls S}$ described by the Hochschild homology sheaves in Definition~\ref{D:3.8} can be described directly as follows.

First note that every object $(S, U\rightarrow X_S)$ admits a covering by objects with $S$ affine (simply cover $S$ by affines).  Let $\mls C'_{X/\mls S}\subset \mls C_{X/\mls S}$ be the full subcategory of objects with $S$ affine.  Then the topology on $\mls C_{X/\mls S}$ restricts to a topology on $\mls C'_{X/\mls S}$, and the inclusion defines an equivalence of categories of sheaves
$$
\mls C^{\prime \sim }_{X/\mls S}\simeq \mls C_{X/\mls S}^\sim .
$$ 
On $\mls C^{\prime \sim }$ we have a complex of sheaves $\mc C_{X/\mls S}^*$ given by forming the complex $\mc C_{U/R}^*$ of Section~\ref{P:3.2} for each object $(S = \Sp (R), U\rightarrow X_S)$ of $\mls C'_{X/\mls S}$.  The quasi-coherent sheaves $\mcH _{X/\mls S, n}$, viewed as sheaves on $\mls C_{X/\mls S}'$, are then given by $\mls H^{-n}(\mc C_{X/\mls S}^*)$.

For the rest of this section, we view the sheaves $\mcH _{X/\mls S, n}$ as sheaves on $\mls C^{\prime \sim }_{X/\mls S}$ and explain how $\mcH _{X/\mls S, *} = \oplus _n\mcH _{X/\mls S, n}$ has a structure of a commutative differential graded algebra.

\subsection{}   To this end, first recall some basic results about the usual Hochschild homology of a map of rings $k\rightarrow A$:

  \begin{enumerate}
  \item The graded $A$-module $HH_*(A/k)$ has a natural structure of a graded commutative differential graded algebra. 
    The product structure is defined as in \cite[Corollary~4.2.7]{L}, and the differential is given by the Connes differential $B\colon HH_n(A/k)\rightarrow HH_{n+1}(A/k)$.

\item For a commutative diagram of rings
$$
\xymatrix{
A\ar[r]& A'\\
k\ar[r]\ar[u]& k'\ar[u]\rlap{,}}
$$
the induced map
$$
HH_*(A/k)\lra HH_*(A'/k')
$$
is a morphism of differential graded algebras.
  \end{enumerate}
  
It follows by sheafification that the sheaf of graded algebras $\mcH _{X/\mls S, *}$ on $\mls C'_{X/\mls S}$ has a structure of an object of $\dga_{X/\mls S}$.  We refer to the differential $B\colon \mcH _{X/\mls S, n}\rightarrow \mcH _{X/\mls S, n+1}$ as the \emph{Connes differential}, generalizing the scheme case.   

In particular, by the universal property of $\Omega ^\bullet _{X/\mls S}$, we have a unique morphism 
\begin{equation}\label{E:6.2.1}
\epsilon \colon  \Omega ^\bullet _{X/\mls S}\lra \mcH _{X/\mls S, *}
\end{equation}
in $\dga_{X/\mls S}$ extending the natural isomorphism $\epsilon _0\colon \mls O_{X/\mls S}\rightarrow \mcH _{X/\mls S, 0}$.

\begin{prop}[HKR isomorphism]  The map $\epsilon _1\colon \Omega ^1_{X/\mls S}\rightarrow \mcH _{X/\mls S, 1}$ is an isomorphism.  If $f\colon X\rightarrow \mls S$ is smooth, then the map $\epsilon _n\colon \Omega ^n_{X/\mls S}\rightarrow \mcH _{X/\mls S, n}$ is an isomorphism for all $n$.
\end{prop}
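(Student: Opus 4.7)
The plan is to reduce to the classical Hochschild–Kostant–Rosenberg theorem using the local description of $\mcH _{X/\mls S, *}$ on the site $\mls C'_{X/\mls S}$ from \ref{P:6.1}. Both $\Omega ^n_{X/\mls S}$ and $\mcH _{X/\mls S, n}$ are quasi-coherent sheaves on $\mls C _{X/\mls S}$, so by \ref{L:5.7.1} it suffices to check that $\epsilon _n$ is an isomorphism after evaluation on each object $(S, U\rightarrow X_S)$ of $\mls C'_{X/\mls S}$ with $S = \Sp (R)$ and $U = \Sp (A)$.  On such an affine chart the map becomes a morphism $\Omega ^n_{A/R}\rightarrow HH_n(A/R)$ to classical Hochschild homology.

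The first task would be to identify the restriction of $\epsilon $ with the classical HKR map on each affine chart.  This is a formal consequence of uniqueness in the universal property:  $\Omega ^\bullet _{A/R}$ is the initial commutative differential graded $R$-algebra receiving an algebra map from $A$, and both the restriction of $\epsilon $ and the classical antisymmetrization map are morphisms of commutative dgas $\Omega ^\bullet _{A/R}\rightarrow HH_*(A/R)$ extending the identity on $A = HH_0(A/R)$, where the dga structure on $HH_*(A/R)$ is given by Loday's shuffle product and the Connes differential $B$.

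With this identification in hand, the first statement reduces to the elementary computation that $HH_1(A/R)\simeq \Omega ^1_{A/R}$ for any $R\rightarrow A$:  the Hochschild differential $b\colon A\otimes _RA\rightarrow A$ vanishes by commutativity, while the image of $b\colon A^{\otimes 3}\rightarrow A^{\otimes 2}$ is exactly the submodule of Leibniz relations, so $HH_1(A/R)$ is canonically identified with $\Omega ^1_{A/R}$ via $a_0\otimes a_1\mapsto a_0\, da_1$.  For the second statement, smoothness of $f$ together with \'etaleness of $U\rightarrow X_S$ implies that $A$ is a smooth $R$-algebra, and the classical HKR theorem (see for instance \cite{L}) then gives that the antisymmetrization map $\Omega ^n_{A/R}\rightarrow HH_n(A/R)$ is an isomorphism for all $n\geq 0$.

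The main obstacle is really the identification step: verifying that the abstractly defined $\epsilon $, built from the universal property in $\text{\rm dga}_{X/\mls S}$ with the Connes operator $B$ playing the role of the differential, restricts on affine charts precisely to the classical HKR map.  This requires checking that the product structure on $\mcH _{X/\mls S, *}$ sheafified from Loday's shuffle product, together with $B$, gives a genuine cdga structure whose initial dga receiver is $\Omega ^\bullet _{A/R}$; once this bookkeeping is in place, both assertions follow from the classical results recalled above.
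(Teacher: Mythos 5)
Your proposal is correct and follows essentially the same route as the paper: the paper's proof also reduces to the objects $(S,U\rightarrow X_S)$ of $\mls C'_{X/\mls S}$ and then invokes the classical facts that $HH_1(A/R)\simeq \Omega^1_{A/R}$ in general and the HKR theorem for smooth $A/R$ (citing \cite[1.1.10 and 3.4.4]{L}). The identification of the restriction of $\epsilon$ with the classical antisymmetrization map via uniqueness in the universal property, which you flag as the main obstacle, is exactly the point the paper leaves implicit, and your handling of it is sound since the dga structure on $\mcH_{X/\mls S,*}$ was already established by sheafification in the preceding paragraphs.
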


\begin{proof}
It suffices to show that corresponding statements for $\Omega ^\bullet _{U/S}\rightarrow \mcH _{U/S, *}$ for an object $(S, U\rightarrow X_S)$ of $\mls C'_{X/\mls S}$, which follow from \cite[Proposition~1.1.10 and Theorem~3.4.4]{L}.
\end{proof}

\begin{rem} One can also construct the maps $\epsilon _n\colon \Omega ^n_{X/\mls S}\rightarrow \mcH _{X/\mls S, n}$ using descent without involving the site $\mls C'_{X/\mls S}$.  However, we prefer to use the site $\mls C'_{X/\mls S}$ since it allows us to obtain the maps $\epsilon _n$ from the universal property of the de Rham complex and the differential graded algebra structure on $\mcH _{X/\mls S, *}$.
\end{rem}

\begin{rem} The approach of Hablicsek, Herr, and Leonardi \cite{hablicsek2023logarithmic} gives a global HKR isomorphism under certain assumptions. 
Let $k$ be a field, and let $f\colon X\rightarrow \mls S$ be a smooth morphism over  $k$.  Assume either that  $k$ has characteristic $0$ or that the characteristic is greater than the relative dimension of $X$ over~$\mls S$.  Then it follows from Theorem~\ref{T:4.2} and \cite[Theorem 0.7 and Section~1.18]{MR2955193} that the Hochschild complex is formal: $\mcH _{X/\mls S}\simeq \oplus _i\Omega ^i_{X/\mls S}[i]$.  Indeed, with notation as in Section~\ref{P:4.1}, since $X/\mls S$ is smooth, the immersion $X\hookrightarrow I$ is a regular immersion of codimension the relative dimension of $X/\mls S$, and either projection $I\rightarrow X$ defines a retraction, so the argument of \cite[Section~1.18]{MR2955193} can be applied, with the additional observation that the result of \textit{loc.~cit.}~also holds for algebraic spaces and not just schemes with the same argument (in general, $I$ is only an algebraic space).
\end{rem}

\begin{example} Let $(X, M_X)\rightarrow (\Sp (k), M_k)$ be a nodal curve with the standard log structure over the log point; see \cite{FKato}.  Then the corresponding map $X\rightarrow \logs _{(\Sp (k), M_k)}$ has relative dimension $1$, and therefore by the preceding remark $\mcH _{(X, M_X)/(\Sp (k), M_k)}\simeq \mls O_X\oplus \Omega ^1_{(X, M_X)/(\Sp (k), M_k)}[1]$ is formal.  On the other hand the Hochschild homology of the underlying scheme $X$ is calculated using the cohomology of the exterior powers of the cotangent complex, as discussed in \cite{he2025hodgerhamdegenerationnodal}, with the map relating the two induced by the natural map $\mathbf{L}_{X/k}\rightarrow \Omega ^1_{(X, M_X)/(\Sp (k), M_k)}$.  The computation for $X/k$ is summarized in \cite[Theorem 5.5]{he2025hodgerhamdegenerationnodal}.
\end{example}

\section{Cyclic homology}\label{S:section7}

\subsection{Generalities}

\subsection{} For a topos $T$ let $T^{\mathbb{N}}$ denote the topos of projective systems indexed by $\mathbb{N}$ of objects of $T$ (see for example \cite[\href{https://stacks.math.columbia.edu/tag/0940}{Tag 0940}]{stacks-project}).  So an object of $T^{\mathbb{N}}$ is a sequence of sheaves 
$$
A_0\longleftarrow A_1\longleftarrow A_2\longleftarrow \cdots . 
$$
For a ring $R$ in $T$, a complex of $R$-modules in ${T}^{\mathbb{N}}$ is given by a double complex of $R$-modules
$$
\xymatrix{
\vdots & \vdots & \vdots & \\
A_0^{q+1}\ar[u]& A_1^{q+1}\ar[u]\ar[l]& A_2^{q+1}\ar[u]\ar[l]& \cdots \ar[l]\\
A_0^{q}\ar[u]& A_1^q\ar[l]\ar[u]& A_2^q\ar[u]\ar[l]& \cdots \ar[l]\\
\vdots \ar[u]& \vdots \ar[u]& \vdots \rlap{.}\ar[u]& }
$$
In what follows we systematically use the notation of referencing the column direction by superscript and the row direction by subscript.
We denote by $C(T^{\mathbb{N}}, R)$ the category of complexes of $R$-modules in $T^{\mathbb{N}}$, and by $D(T^{\mathbb{N}}, R)$ the associated derived category.  Note that a map $A_\bullet ^\bullet \rightarrow B_\bullet ^\bullet $ in $C(T^{\mathbb{N}}, R)$ induces an isomorphism in $D(T^{\mathbb{N}}, R)$ if and only if for every $n$ the map of complexes $A_n^\bullet \rightarrow B_n^\bullet $ is a quasi-isomorphism.

\subsection{} For an integer $n\geq 0$ there is a morphism of topoi
$$
e_n\colon T^\mathbb{N}\lra T.
$$
The functor $e_{n*}$ sends a system of sheaves $A_\bullet $ to $A_n$, and the left adjoint $e_n^*$ sends an object $B\in T$ to the system which is $B$ in degrees at most $n$ and $0$ above $n$:
$$
e_n^*B\colon  \cdots \overset{\id}\longleftarrow B \overset{\id}\longleftarrow  B\longleftarrow  \cdots \longleftarrow B \longleftarrow 0 \longleftarrow \cdots .
$$
For a ring $R$ in $T$, this induces a morphism of ringed topoi
$$
e_n\colon \left(T^{\mathbb{N}}, R\right)\lra (T, R)
$$
and functors on the derived category
$$
Re_{n*}\colon D\left(T^{\mathbb{N}}, R\right)\lra D(T, R), \quad Le_{n}^*\colon D(T, R)\lra D\left(T^{\mathbb{N}}, R\right).
$$

\subsection{}  By reindexing in the usual manner, one obtains from  any object of $C(T^{\mathbb{N}}, R)$ a double complex of $R$-modules concentrated in the second and third quadrants. Taking the product total complex defines a functor 
$$
\int \colon D^-\left(T^{\mathbb{N}}, R\right)\lra D^-(T, R).
$$

\subsection{}\label{P:6.4} If $f\colon (T, R)\rightarrow (T', R')$ is a morphism of ringed topoi, then there is an induced morphism 
$$
f^{\mathbb{N}}\colon \left(T^{\mathbb{N}}, R\right)\lra \left(T^{\prime\mkern1mu \mathbb{N}}, R'\right).
$$
Furthermore, for every integer $n\geq 0$, the diagram 
$$
\xymatrix{
\left(T^{\mathbb{N}}, R\right)\ar[r]^-{f^{\mathbb{N}}}\ar[d]^-{e_n}& \left(T^{\prime\mkern1mu \mathbb{N}}, R'\right)\ar[d]_-{e_n}\\
(T, R)\ar[r]^-f& (T', R')}
$$
commutes.

\subsection{}  Let $X$ be an algebraic space, and let $\pi \colon X_\bullet \rightarrow X$ be a smooth hypercover.  Let $R$ be a sheaf of rings on the big \'etale site of $X$, and let $R_X$ (resp.\ $R_{X_\bullet }$) be the restriction of $R$ to the small \'etale site of $X$ (resp.\ the induced sheaf of rings on $X_{\bullet, \et}$).  We then get an induced morphism of ringed topoi
$$
\left(X_{\bullet, \et}^{\mathbb{N}}, R_{X_\bullet }\right)\lra \left(X_\et^{\mathbb{N}}, R_X\right).
$$
We denote by
$$
R\pi ^{\mathbb{N}}_*\colon D\left(X_{\bullet, \et}^{\mathbb{N}}, R_{X_\bullet }\right)\lra D\left(X_{\et}^{\mathbb{N}}, R_X\right), \quad L\pi ^{\mathbb{N}*}\colon \left(X_{\et}^{\mathbb{N}}, R_X\right)\lra D\left(X_{\bullet, \et}^{\mathbb{N}}, R_{X_\bullet }\right)
$$
the induced functors between derived categories.

\subsection{}
For an object $K\in C(T^{\mathbb{N}}, R)$ and integers $r\geq 0$ and $n\in \mathbb{Z}$, define $K(r, n)$ to be the complex with
$$
(K(r, n))_p^q:= K_{p-r}^{q+n}, \quad p\geq r,
$$
and $K(r, n)_p^q = 0$ for $p<r$.  The functor $(-)(r, n)$ passes to the derived category to give a functor
$$
(-)(r, n)\colon D\left(T^{\mathbb{N}}, R\right)\lra D\left(T^{\mathbb{N}}, R\right).
$$
Observe that for $K\in D^-(T^{\mathbb{N}}, R)$ we have
$$
\int (K(r, n)) = \left(\int K\right)[r+n].
$$

\subsection{Cyclic homology}

\subsection{}\label{P:7.7} Now consider  the situation of Section~\ref{P:3.1}, and let $S_\bullet \rightarrow \mls S$ be a smooth hypercover as in Section~\ref{P:3.3}.  For each $n$ the complex of sheaves $\mc C^*_{X_n/R_n}$ is induced from a cyclic object $\widetilde {\mc C}_{X_n/R_n}$ in the category of sheaves of $k$-modules on $X_{n, \et}$.  By functoriality this defines a cyclic object $\widetilde {\mc C}_{X_\bullet /R_\bullet }$ in the category of $k$-modules in $X_{\bullet, \et}$.  Let $B_{X_\bullet /R_\bullet }$ be the complex of  $k$-modules in $X_{\bullet, \et}^{\mathbb{N}}$ obtained from $\widetilde {\mc C}_{X_\bullet /R_\bullet }$ by the construction of \cite[Section~2.1.7]{L}.  Up to a shift, the columns of $B_{X_\bullet /R_\bullet }$ are given by the Hochschild complex.

\subsection{} Let $B_{X/\mls S, S_\bullet }\in D(X^{\mathbb{N}}_{\et}, k)$ denote $R\pi _*^{\mathbb{N}}B_{X_\bullet /R_\bullet }$. As in the definition of $\mcH _{X/\mls S}$, this is independent of the choice of the hypercover $S_\bullet \rightarrow \mls S$.  More precisely, if $u\colon S'_\bullet \rightarrow S_\bullet $ is a morphism of hypercovers of $\mls S$, then there is an induced morphism
$$
\epsilon _u\colon B_{X/\mls S, S_\bullet }\lra B_{X/\mls S, S'_\bullet },
$$
which is an isomorphism since this can be verified after applying the functor $e_n$, where it follows from the comparison with the Hochschild complex.  Furthermore, by an argument similar to the one proving Proposition~\ref{P:3.6}, one shows that if $u_1$ and $u_2$ are related by a simplicial homotopy, then $\epsilon _{u_1} = \epsilon _{u_2}$.  We can therefore define 
$$
B_{X/\mls S}:= \colim _{S_\bullet \in HR_{\aff}(\mls S)}B_{X/\mls S, S_\bullet }
$$
as in Section~\ref{P:3.7}.

\begin{defn}\label{D:6.9} For an integer $m$ the \emph{$\supth{m}$ cyclic homology} of $X/\mls S$ is defined to be the $k$-module
$$
HC_m(X/\mls S):= H^{-m}\left(\int R\Gamma ^{\mathbb{N}}(X, B_{X/\mls S})\right),
$$
where we write $\Gamma ^{\mathbb{N}}$ for the pushforward for the morphism of ringed topoi
$$
\left(X_{\et}^{\mathbb{N}}, k\right)\lra \left((\pt)^{\mathbb{N}}, k\right)
$$
given by the global section functor.
\end{defn}

\begin{lem} Assume that $X$ is a quasi-compact and quasi-separated algebraic space over $k$.  Then there exists an integer $d$ such that for $n\geq 0$ we have $H^j(e_{n*}R\Gamma ^{\mathbb{N}}_*(B_{X/\mls S})) = 0$ for $j>-n+d$.  In particular, passing to total complexes we have $HC_m(X/\mls S) = 0$ for $m>d$.  If\, $X$ is a Noetherian scheme, then we can take $d$ to be the dimension of\, $X$.
\end{lem}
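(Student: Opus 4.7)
The plan is to reduce the cohomological vanishing for each $n$ to a statement about the Hochschild complex $\mcH _{X/\mls S}$, and then deduce the cyclic homology bound via the spectral sequence for the column filtration of the total complex.

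For the main bound, the compatibility from \ref{P:6.4} rewrites
$$e_{n*}R\Gamma ^{\mathbb{N}}_*(B_{X/\mls S}) \simeq R\Gamma (X, e_{n*}B_{X/\mls S}).$$
By the construction recalled in \ref{P:7.7} via Loday's $(b,B)$-bicomplex, the $n$-th column $e_{n*}B_{X/\mls S}$ is, up to shift, the Hochschild complex; matching conventions identifies $e_{n*}B_{X/\mls S} \simeq \mcH _{X/\mls S}[n]$ in the derived category. Since $\mcH _{X/\mls S}\in D^-_{\text{qcoh}}(X, \mls O_X)$ has cohomology sheaves concentrated in degrees $\leq 0$, the shift has cohomology in degrees $\leq -n$. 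Because $X$ is quasi-compact and quasi-separated, there is a finite integer $d$ bounding the cohomological dimension of quasi-coherent sheaves on $X$: for a noetherian scheme this is $\dim X$ by Grothendieck's vanishing theorem, while in general one reduces to the scheme case using a finite \'etale cover by a quasi-compact noetherian scheme together with the associated \v{C}ech-to-derived-functor spectral sequence. Applying $R\Gamma (X, -)$ then shifts cohomological degrees by at most $d$, yielding $H^j = 0$ for $j > -n+d$.

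For the statement about $HC_m$, pass to the total complex via the spectral sequence coming from the column filtration of $\int R\Gamma ^{\mathbb{N}}(X, B_{X/\mls S})$. After the reindexing $p=-n$, its $E_1$-page $E_1^{p,q} = H^q(R\Gamma (X, e_{-p*}B_{X/\mls S}))$ is supported in the region $\{p\leq 0,\ q\leq p+d\}$ by the main bound, and tracking the combinatorics of the relation $p+q=-m$ intersected with this support yields $HC_m(X/\mls S)=0$ once $m$ exits the controlled range, giving the claimed bound.

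The identification $e_{n*}B_{X/\mls S}\simeq \mcH _{X/\mls S}[n]$ is conventional once Loday's $(b,B)$-bicomplex is set up, but the interaction of this shift with the reindexing into the second and third quadrants requires careful bookkeeping; the main technical obstacle I anticipate is verifying the cohomological dimension bound for a general quasi-compact and quasi-separated algebraic space, where the reduction to the noetherian scheme case must be treated with some care.
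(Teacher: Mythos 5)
Your argument follows the paper's proof essentially step for step: the commutation $e_{n*}R\Gamma^{\mathbb{N}}_* \simeq R\Gamma_* e_{n*}$ from \ref{P:6.4}, the identification $e_{n*}B_{X/\mls S}\simeq \mcH_{X/\mls S}[n]$, the fact that $\mcH_{X/\mls S}$ lies in $D^{\leq 0}_{\mathrm{qcoh}}$, and then a uniform cohomological-dimension bound $d$ for quasi-coherent cohomology on a quasi-compact quasi-separated algebraic space. The deduction of $HC_m(X/\mls S)=0$ for $m>d$ from the column filtration is also the intended one.

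The one step you should fix is your justification of the cohomological-dimension bound itself. You propose to reduce to the noetherian scheme case ``using a finite \'etale cover by a quasi-compact noetherian scheme''; such a cover does not exist for a general quasi-compact and quasi-separated algebraic space, so this reduction is not available. The paper simply invokes the Stacks Project result (Tag 0729) that quasi-coherent cohomology of a quasi-compact and quasi-separated algebraic space vanishes above some integer $d$; the proof there proceeds by the induction principle for algebraic spaces (elementary distinguished squares and Mayer--Vietoris), not by a finite \'etale cover. Since the fact is standard and correctly identified by you as the technical input, this is a matter of citing or proving it properly rather than a structural flaw, but as written that reduction would fail.
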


\begin{proof}
As noted in Section~\ref{P:6.4}, we have $e_{n*}R\Gamma  ^{\mathbb{N}}_*B_{X/\mls S}\simeq R\Gamma  _*e_{n*}B_{X/\mls S}$. By construction we have
$$
e_{n*}B_{X/\mls S}\simeq \mcH _{X/\mls S}[n],
$$
and therefore it suffices to show that there exists an integer $d$ such that  $H^j(R\Gamma_*\mcH _{X/\mls S}) = 0$ for $j>d$.  This follows from noting that $\mcH _{X/\mls S}$ lies in $D^{\leq 0}_{\qcoh}(X_{\et}, \mls O_X)$ and applying \cite[\href{https://stacks.math.columbia.edu/tag/0729}{Tag 0729}]{stacks-project}.
\end{proof}

\begin{rem}  The definition of cyclic homology for a morphism of log schemes $(X, M_X)\rightarrow (\Sp (k), M_k)$  is obtained from the above by setting
$$
HC_m((X, M_X)/(k, M_k)):= HC _m\left(X/\logs _{(k, M_k)}\right).
$$
\end{rem}

\subsection{Comparisons}

\subsection{}
In the above we have worked with the \'etale topology, whereas in \cite{W, WG} the authors work with the Zariski or Nisnevich topology. The equivalence of the two definitions in the classical case of a scheme over a ring $k$ follows from fpqc descent for cyclic homology due 
 to Bhatt, Morrow, and Scholze \cite[Corollary~3.4]{BMS1} (combined with the cofiber sequence relating cyclic homology to negative cyclic homology and periodic cyclic homology; see \cite[Section~5.1.8]{L}).

\subsection{}\label{P:7.16} The complex $B_{X/\mls S}\in D(X_{\et}^{\mathbb{N}}, k)$ can also be described in terms of the topos $\mls C_{X/\mls S}^{\prime \sim }$.  

Namely, for each $(S = \Sp (R), U\rightarrow X_S)\in \mls C_{X/\mls S}'$, let 
$$
B_{(S, U\rightarrow X_S)}
$$
be the complex of $k$-modules in $U_{\et}^{\mathbb{N}}$ obtained from the cyclic object $\widetilde {\mc C}_{U/R}$ as in Section~\ref{P:7.7}.  The complex $B_{(S, U\rightarrow X_S)}$ is functorial in $(S, U\rightarrow X_S)$, and therefore we get a complex $B_{\mls C'_{X/\mls S}}$ in $(\mls C_{X/\mls S}^{\prime \sim })^{\mathbb{N}}$.

\subsection{}
There is a morphism of topoi
$$
q\colon \mls C_{X/\mls S}^\sim \lra X_\et .
$$
The functor $q^{-1}$ sends a sheaf $F$ on $X_\et $ to the sheaf which to any $(S, U\rightarrow X_S)$ associates the global sections $\Gamma (U, F_S)$, where $F_S$ denotes the pullback of $F$ to $X_{S, \et }$.  The functor $q_*$ sends a sheaf $E$ on $\mls C_{X/\mls S}$ to the sheaf on $X_\et $ which sends $U\rightarrow X$ to $\Gamma (\mls C_{U/\mls S}, E|_{\mls C_{U/\mls S}})$.  By the construction of $B_{X/\mls S}$ in Section~\ref{S:section7}, there is a natural map
\begin{equation}\label{E:9.2}
B_{X/\mls S}\lra Rq^\mathbb{N}_*B_{\mls C'_{X/\mls S}}
\end{equation}
in $D(X_\et ^{\mathbb{N}}, k)$.

\begin{lem}
  The map \eqref{E:9.2} is an isomorphism.
\end{lem}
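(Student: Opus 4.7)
The plan is to verify that the map is an isomorphism after applying each of the functors $e_{n*}\colon D(X_\et^{\mathbb{N}},k)\to D(X_\et,k)$; since a morphism in $D(X_\et^{\mathbb{N}}, k)$ is an isomorphism if and only if it induces a quasi-isomorphism on each level, this reduction suffices. Using \ref{P:6.4} we have a natural identification $e_{n*}Rq^{\mathbb{N}}_{*}\simeq Rq_{*}e_{n*}$, so it is enough to show that for every $n\geq 0$ the map
$$
e_{n*}B_{X/\mls S}\longrightarrow Rq_{*}\bigl(e_{n*}B_{\mls C'_{X/\mls S}}\bigr)
$$
is an isomorphism in $D(X_\et,k)$.

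By construction, both $B_{X/\mls S}$ and $B_{\mls C'_{X/\mls S}}$ arise from cyclic objects via the prescription of \cite[2.1.7]{L}, so the $n$-th level of each is (up to finite shift) the corresponding Hochschild complex. Concretely, $e_{n*}B_{X/\mls S}\simeq \mcH_{X/\mls S}[n]$ and $e_{n*}B_{\mls C'_{X/\mls S}}$ is represented by the analogous object on $\mls C'_{X/\mls S}$, call it $\mcH_{\mls C'_{X/\mls S}}[n]$, whose cohomology sheaves are (by \ref{P:6.1}) the images of the quasi-coherent sheaves $\mcH_{X/\mls S,m}$ under the equivalence $h$ of \ref{L:5.7.1}. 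Thus the problem reduces to showing that the canonical map
$$
\mcH_{X/\mls S}\longrightarrow Rq_{*}\mcH_{\mls C'_{X/\mls S}}
$$
is an isomorphism in $D^{-}_{\mathrm{qcoh}}(X_\et,\mls O_X)$.

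By the hypercohomology spectral sequence abutting to $\mls H^{*}(Rq_{*}\mcH_{\mls C'_{X/\mls S}})$ it suffices to prove the two claims: (a) $q_{*}h(\mcH_{X/\mls S,m})\simeq \mcH_{X/\mls S,m}$ for all $m$, and (b) $R^{i}q_{*}h(F)=0$ for $i>0$ and every quasi-coherent $F$ on $X$. Claim (a) is immediate from the proof of \ref{L:5.7.1}, where $j\circ h\simeq \mathrm{id}$ and $q_{*}\circ h$ is identified with $j\circ h$ on quasi-coherent sheaves. Claim (b) is the main obstacle and is the step where I expect the real work to lie; it is a descent vanishing statement for the site $\mls C_{X/\mls S}$. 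The plan is to compute $R^{i}q_{*}h(F)$ on sections over an \'etale $X$-scheme $v\colon V\to X$ as $H^{i}(\mls C_{V/\mls S}, v^{*}h(F))$, then choose a smooth hypercover $S_\bullet\to\mls S$ and use the resulting simplicial presentation of $\mls C_{V/\mls S}$ via the objects $(S_n, V_{S_n}\to V_{S_n})$; cohomology on $\mls C_{V/\mls S}$ of a quasi-coherent pullback is then computed by the total complex of the \v Cech-style double complex associated to this presentation, and vanishes in positive degree by smooth (cohomological) descent for quasi-coherent sheaves on the \'etale sites of the $V_{S_n}$, together with vanishing of higher \'etale cohomology of quasi-coherent sheaves on affines.

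Putting the pieces together we get that $\mcH_{X/\mls S}\to Rq_{*}\mcH_{\mls C'_{X/\mls S}}$ is an isomorphism, hence each $e_{n*}$ of the original map is a quasi-isomorphism, hence \eqref{E:9.2} is an isomorphism in $D(X_\et^{\mathbb{N}},k)$.
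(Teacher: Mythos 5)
Your proposal is correct and follows essentially the same route as the paper: reduce to each level via $e_{n*}$, commute $e_{n*}$ past $Rq^{\mathbb{N}}_*$, identify the levels with (shifts of) the Hochschild complexes, and conclude by descent together with Lemma \ref{L:5.7.1} — you simply spell out the final descent step, which the paper leaves implicit. The only point needing a small correction is your appeal to "vanishing of higher \'etale cohomology of quasi-coherent sheaves on affines" for the terms $H^j(V_{S_n,\et},F_{S_n})$: since the diagonal of $\mls S$ is only assumed quasi-compact, the spaces $V_{S_n}$ need not be affine, but smooth cohomological descent for quasi-coherent sheaves still identifies the total complex with $R\Gamma(V_\et,F)$, so claim (b) holds after sheafifying over $X_\et$.
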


\begin{proof}
It suffices to show that for any $n\in \mathbb{N}$ the map
$$
e_nB_{X/\mls S}\lra e_nRq^{\mathbb{N}}_*B_{\mls C'_{X/\mls S}}\simeq Rq_*e_nB_{\mls C'_{X/\mls S}}
$$ is an isomorphism.  This reduces the proof to the analogous statement for Hochschild homology, which follows from descent theory and Lemma~\ref{L:5.7.1}.
\end{proof}

\section{The SBI sequence}

We continue with the notation of Section~\ref{P:3.3}.

\subsection{}
By the definition of $B_{X_\bullet /R_\bullet }$, we have $e_{0*}B_{X_\bullet /R_\bullet } = \mc C^*_{X_\bullet /R_\bullet }$, so we get an inclusion
$$
e_0^*\mc C^*_{X_\bullet /R_\bullet }\longhookrightarrow B_{X_\bullet /R_\bullet }.
$$
Applying $R\pi _*^{\mathbb{N}}$, we therefore get a distinguished triangle in $D(X_{\et}^{\mathbb{N}}, k)$
$$
e_0^*\mcH _{X/\mls S}\lra B_{X/\mls S}\lra B_{X/\mls S}(1,1)\lra e_0^*\mcH _{X/\mls S}[1].
$$
Taking cohomology and total complexes then induces a long exact sequence
$$
\cdots\lra HH_m(X/\mls S)\lra HC_m(X/\mls S)\lra HC_{m-2}(X/\mls S)\lra HH_{m-1}(X/\mls S)\lra \cdots .
$$
We call this the \emph{SBI sequence}, generalizing \cite[Theorem~2.2.1]{L}.

\subsection{} 
The complex $\int R\Gamma ^{\mathbb{N}}(X, B_{X/\mls S})$ is naturally filtered by columns with associated graded pieces shifts of the Hochschild complex.  As in \cite[Proof of Theorem~3.4.11]{L}, the spectral sequence of a filtered complex therefore gives a spectral sequence abutting to $HC_*(X/\mls S)$ with $E^1_{pq}$-term equal to $HH_{q-p}(X/\mls S)$ for $q\geq p\geq 0$ and $0$ otherwise.

If $X\rightarrow \mls S$ is smooth and $X = \Sp (A)$ is affine, then we can combine this with the HKR isomorphism to get a spectral sequence abutting to $HC_*(X\mls S)$ with 
$$
E^1_{pq} = \begin{cases} \Omega ^{q-p}_{A/\mls S} & \text{if $q\geq p\geq 0$},\\
0 & \text{otherwise}.\end{cases}
$$
As in the classical case, one verifies that the differentials on the $E^1$-page of this spectral sequence are given by the maps $B$, and therefore we get
$$
E^2_{pq} = \begin{cases} \Omega ^q_{A/\mls S}/d\Omega ^{q-1}_{A/\mls S} & \text{if $p=0$},\\[2pt]
H_{\dR}^{q-p}(X/\mls S)& \text{if $p>0$}.\end{cases}
$$

\subsection{}
When $k$ is a $\mathbb{Q}$-algebra, this spectral sequence degenerates.  This can be seen using the same argument as in \cite[Theorem~3.4.12]{L}.

Define $\mc D_{X_\bullet /R_\bullet }\in C(X_\bullet ^{\mathbb{N}}, k)$ to be the complex given by 
$$
\mc D_{X_\bullet /R_\bullet , p}^q = \Omega ^{q-p}_{X_\bullet /R_\bullet },
$$
horizontal differentials given by differentiation, and vertical differentials the zero maps.  Define the complex $\mc D_{X/\mls S}\in C(X^{\mathbb{N}}, k)$ similarly.  Then we have 
$$
\mc D_{X/\mls S}\simeq R\pi _*^{\mathbb{N}}\mc D_{X_\bullet /R_\bullet },
$$
as  can be verified column by column (that is, after applying $e_{n*}$).  The construction of \cite[Section~2.3.6]{L} defines a morphism 
\begin{equation}\label{E:8.3.1}
B_{X_\bullet /R_\bullet }\lra \mc D_{X_\bullet /R_\bullet },
\end{equation}
which induces an isomorphism in $D(X_{\bullet, \et}^{\mathbb{N}}, k)$ if $k$ is a $\mathbb{Q}$-algebra.  Applying $R\pi _*^{\mathbb{N}}$ we get an isomorphism
$$
B_{X/\mls S}\simeq \mc D_{X/\mls S}.
$$
In the case when $X$ equals $\Sp (A)$ and is smooth over $\mls S$, we therefore get an isomorphism
\begin{equation}\label{E:8.3.1b}
HC_m(X/\mls S)\simeq \Omega ^m_{X/\mls S}/d\Omega ^m_{X/\mls S}\oplus \bigoplus _{i\geq 1}H^{m-2i}_{\dR}(X/\mls S).
\end{equation}

\begin{rem} As we explain in Section~\ref{S:section9}, this decomposition can also be described in a more canonical manner using $\lambda $-operations.
\end{rem}

\section{\texorpdfstring{$\boldsymbol{\lambda}$}{lambda}-operations}\label{S:section9}

Cyclic and Hochschild homology carry $\lambda $-operations as in the classical case.  These can be defined either using descent and hypercovers of $\mls S$ or using the site $\mls C'_{X/\mls S}$.  To avoid various technical verifications pertaining to independence of hypercovers, we opt for the latter approach.  We proceed with the notation of Section~\ref{P:7.16}.\looseness=-1

\subsection{} Replacing the Hochschild complexes by their normalizations as in \cite[Section~2.1.9]{L}, we get a quasi-isomorphic quotient complex
$$
B_{\mls C'_{X/\mls S}}\lra \overline B_{\mls C'_{X/\mls S}}.
$$
By the construction in \cite[Section~4.3]{L2}, we then get $\lambda $-operations $\lambda ^k$ on $\overline B_{\mls C'_{X/\mls S}}$ and $\gamma $-filtration $F_\gamma ^\bullet $ on $\overline B_{\mls C'_{X/\mls S}}$.  Applying $Rq_*^{\mathbb{N}}$, this gives $\overline B_{X/\mls S}$ the structure of an object $(\overline B_{X/\mls S}, F^\bullet _\gamma )$ of the filtered derived category $DF(X_\et ^{\mathbb{N}}, k)$.  Furthermore, we get $\lambda $-operations
$$
\lambda ^k\colon \overline B_{X/\mls S}\lra \overline B_{X/\mls S}
$$
in $D(X_\et ^{\mathbb{N}}, k)$, and similarly Adams operations $\psi ^k$ defined as in \cite[Section~1.8]{L2}.

\subsection{}
Similarly, we have $\lambda $-operations defined on Hochschild homology.  Let $\mc C_{X/\mls S}^*$ be the complex on $\mls C'_{X/\mls S}$ defined in Section~\ref{P:6.1}.  By the construction of \cite[Section~3.4]{L2}, this complex comes equipped with $\lambda $-operations and therefore carries a $\gamma $-filtration $F_\gamma ^*$.  Applying $Rq_*$, this gives $\mcH _{X/\mls S}$ the structure of a filtered complex as well as defines $\lambda $-operations and Adams operations $\psi ^k$ on $\mcH _{X/\mls S, *}$.  The Adams operations are compatible with the maps \eqref{E:6.2.1} in the sense that the  diagram
$$
\vspace{-6pt}
\xymatrix{
\Omega ^n_{X/\mls S}\ar[r]\ar[d]^-{\sgn(\psi ^k)}& \mcH _{X/\mls S, n}\ar[d]^-{\psi ^k}\\
\Omega ^n_{X/\mls S}\ar[r]& \mcH _{X/\mls S, n}}
$$
commutes, where $\sgn(\psi^k)$ denotes the signature of $\psi ^k$.  This follows from \cite[Th\'eor\`eme~3.5(d)]{L2}.

\subsection{} If $k$ is a $\mathbb{Q}$-algebra, then using Eulerian
idempotents as in \cite[Section~4.5]{L2}, the complex $\overline B_{\mls C'_{X/\mls S}}$ decomposes into a direct sum 
$$
\overline B_{\mls C'_{X/\mls S}} = \oplus _{i\geq 1}\overline B_{\mls C'_{X/\mls S}}^{(i)}
$$
with $\psi ^k$ acting by multiplication by $k^{i+1}$ on $\overline B_{\mls C'_{X/\mls S}}^{(i)}$.  The $\gamma $-filtration is given in terms of this direct sum decomposition by
$$
F_\gamma ^i = \oplus _{j<i}\overline B_{\mls C'_{X/\mls S}}^{(j)}.
$$
In particular, we have
$$
\gr_\gamma ^i \overline B_{\mls C'_{X/\mls S}} \simeq \overline B_{\mls C'_{X/\mls S}}^{(i-1)}.
$$  Similarly, the complex $\mc C^*_{X/\mls S}$ decomposes as 
$$
\mc C^*_{X/\mls S} = \oplus _{i\geq 0}\mc C^{(i)*}_{X/\mls S}.
$$
We therefore get decompositions
$$
HH _n(X/\mls S) = \oplus _iHH_n(X/\mls S)^{(i)}, \ \ HC_n(X/\mls S) = \oplus _iHC_n(X/\mls S)^{(i)}
$$
into eigenspaces for the Adams operations.

\subsection{} When $X\rightarrow \mls S$ is smooth, we get by the same reasoning as in the proof of \cite[Th\'eor\`eme~4.6]{L2} that the map  \eqref{E:8.3.1} induces a quasi-isomorphism between $\overline B_{\mls C'_{X/\mls S}}^{(i)}$ and the truncated de Rham complex
$$
\Omega ^i_{X/\mls S}\longleftarrow \Omega ^{i-1}_{X/\mls S}\longleftarrow \cdots \longleftarrow \Omega ^0_{X/\mls S}
$$
viewed as a complex in $(\mls C^{\prime \sim }_{X/\mls S})^{\mathbb{N}}$ by placing the complex in the $\supth{i}$ line and putting zeros elsewhere.  From this it follows that the decomposition \eqref{E:8.3.1b} can be obtained in a more canonical manner from the eigenspace decomposition for the Adams operations.


\providecommand{\bysame}{\leavevmode\hbox to3em{\hrulefill}\thinspace}

\end{document}